\newtheorem{theorem}{Theorem}
\numberwithin{theorem}{section}
\newtheorem{corollary}[theorem]{Corollary}
\newtheorem{lemma}[theorem]{Lemma}
\newtheorem{proposition}[theorem]{Proposition}
\theoremstyle{definition}
\newtheorem{definition}[theorem]{Definition}
\newtheorem{remark}[theorem]{Remark}
\numberwithin{equation}{section}
\newcommand{\po}{\mathsf{PO}}
\newcommand{\wpo}{\mathsf{WPO}}
\newcommand{\fin}[1]{\left[#1\right]^{<\omega}}
\newcommand{\To}{\Rightarrow}
\newcommand{\rng}{\operatorname{rng}}
\newcommand{\supp}{\operatorname{supp}}
\newcommand{\en}{\operatorname{en}}
\newcommand{\nf}{\mathrel{=_{\operatorname{nf}}}}
\newcommand{\tr}{\operatorname{Tr}}
\newcommand{\rca}{\mathsf{RCA}}
\newcommand{\aca}{\mathsf{ACA}}
\newcommand{\ads}{\mathsf{ADS}}
\newcommand{\cac}{\mathsf{CAC}}
\newcommand{\len}{\operatorname{len}}
\newcommand{\tl}{\trianglelefteq}
\newcommand{\period}{\text{.}}
\newcommand{\tw}{\mathcal T W}
\begin{document}
	
\title[The uniform Kruskal theorem]{The uniform Kruskal theorem: between finite combinatorics and strong set existence}
\author{Anton Freund and Patrick Uftring}

\address{Department of Mathematics, Technical University of Darmstadt, Schloss\-garten\-str.~7, 64289~Darmstadt, Germany}
\email{\{freund,uftring\}@mathematik.tu-darmstadt.de}

\thanks{Funded by the Deutsche Forschungsgemeinschaft (DFG, German Research Foundation) -- Project number 460597863.}

\begin{abstract}
The uniform Kruskal theorem extends the original result for trees to general recursive data types. As shown by A.~Freund, M.~Rathjen and A.~Weier\-mann, it is equivalent to $\Pi^1_1$-comprehension, over $\mathsf{RCA_0}$ with the chain antichain principle~($\mathsf{CAC}$). This result provides a connection between finite combinatorics and abstract set existence. The present paper sheds further light on this connection. First, we show that the original Kruskal theorem is equivalent to the uniform version for data types that are finitely generated. Secondly, we prove a dichotomy result for a natural variant of the uniform Kruskal theorem. On the one hand, this variant still implies \mbox{$\Pi^1_1$-}comprehension over $\mathsf{RCA}_0+\mathsf{CAC}$. On the other hand, it becomes weak when $\mathsf{CAC}$ is removed from the base theory.
\end{abstract}

\keywords{Uniform Kruskal Theorem, Finite Data Types, Reverse Mathematics, Chain Antichain Principle (CAC), Ascending Descending Sequence Principle (ADS), Well Partial Orders, Dilators}
\subjclass[2020]{03B30, 06A07, 05C05, 03F35}

\maketitle

\section{Introduction}

Kruskal's theorem~\cite{kruskal60} asserts that any infinite sequence $t_0,t_1,\ldots$ of finite rooted trees admits \mbox{$i<j$} such that $t_i$ embeds into~$t_j$. Precise definitions for the case of ordered trees can be found in Section~\ref{sect:fin-uniform-original} below. The result for unordered trees has the same strength, due to results by M.~Rathjen and A.~Weier\-mann~\cite{rathjen-weiermann-kruskal} as well as J.~van der Meeren~\cite[Chapter~2]{meeren-thesis}.

As famously shown by H.~Friedman (see~\cite{simpson85}), Kruskal's theorem is unprovable in predicative axiom systems, so that we have a concrete mathematical example for the incompleteness phenomenon from G\"odel's theorems. Specifically, Kruskal's theorem has consistency strength strictly between $\mathsf{ATR_0}$ and $\Pi^1_1\text{-}\mathsf{CA_0}$, the two strongest of five particularly prominent axiom systems from reverse mathematics (see the textbook by S.~Simpson~\cite{simpson09} for general background).

In work of Freund, Rathjen and Weiermann~\cite{frw-kruskal}, it is shown that $\Pi^1_1$-compre\-hension (the~central axiom of $\Pi^1_1\text{-}\mathsf{CA_0}$) is equivalent to a uniform version of Kruskal's theorem, which extends the latter from trees to general recursive data types. In our view, the equivalence sheds some light on a fascinating phenomenon that is typical for modern mathematics, namely on connections between the abstract and the concrete. To refine our understanding of these connections, the present paper investigates how the strength of the uniform Kruskal theorem is affected by certain finiteness conditions.

Our first aim is to recall the precise formulation of the uniform Kruskal theorem. As the following discussion is rather condensed, we point out that more explanations and examples can be found in the introduction of~\cite{frw-kruskal}. A function $f:X\to Y$ between partial orders is called a quasi embedding if it reflects the order, i.\,e., if $f(x)\leq_Y f(x')$ entails~$x\leq_X x'$. It is an embedding if the order is also preserved, i.\,e., if the converse implication holds as well. By $\mathsf{PO}$ we denote the category with the partial orders as objects and the quasi embeddings as morphisms. A functor $W:\po\to\po$ is said to preserve embeddings if~$W(f)$ is an embedding whenever the same holds for~$f$. For a set~$X$ and a function~$f:X\to Y$, we declare
\begin{align*}
\fin{X}&:=\text{``the set of finite subsets of~$X$"},\\
\fin{f}(a)&:=\{f(x)\,|\,x\in a\}\in\fin{Y}\quad\text{for}\quad a\in\fin{X},
\end{align*}
which yields an endofunctor of sets. The forgetful functor from partial orders to sets will be left implicit. The following adapts a classical notion of J.-Y.~Girard~\cite{girard-pi2} from linear to partial orders. We write $\rng(f)$ for the range or image of a function~$f$.

\begin{definition}\label{def:po-dilator}
A $\po$-dilator consists of a functor $W:\po\to\po$ that preserves embeddings and a natural transformation $\supp:W\To\fin{\cdot}$ such that we have
\begin{equation*}
\supp_Y(\sigma)\subseteq\rng(f)\quad\Rightarrow\quad\sigma\in\rng(W(f))
\end{equation*}
for any embedding~$f:X\to Y$ and any~$\sigma\in W(Y)$. If $W(X)$ is a well partial order whenever the same holds for~$X$, then $W$ is called a $\wpo$-dilator. We say that $W$ is normal if
\begin{equation*}
\sigma\leq_{W(X)}\tau\ \ \To\ \ \text{for any }x\in\supp_X(\sigma)\text{ there is an }x'\in\supp_X(\tau)\text{ with }x\leq_X x'
\end{equation*}
holds for any partial order~$X$ and all~$\sigma,\tau\in W(X)$.
\end{definition}

One may think of $\sigma\in W(X)$ as a finite structure with labels from~\mbox{$\supp_X(\sigma)\subseteq X$}. In typical examples, we have $\sigma\leq_{W(X)}\tau$ if there is an embedding that sends each label to a larger one, which witnesses that~$W$ is normal.

\begin{definition}\label{def:Kruskal-fp}
A Kruskal fixed point of a normal~$\po$-dilator~$W$ consists of a partial order~$X$ and a bijection $\kappa:W(X)\to X$ such that all $\sigma,\tau\in X$ validate
\begin{equation*}
\kappa(\sigma)\leq_X\kappa(\tau)\quad\Leftrightarrow\quad\sigma\leq_{W(X)}\tau\text{ or }\kappa(\sigma)\leq_X x\text{ for some }x\in\supp_X(\tau).
\end{equation*}
We say that the given Kruskal fixed point is initial if any other Kruskal fixed point $\pi:W(Y)\to Y$ admits a unique quasi embedding $f:X\to Y$ with $f\circ\kappa=\pi\circ W(f)$.
\end{definition}

The trees form the initial Kruskal fixed point of the transformation that maps~$X$ to the collection of finite sequences in~$X$ (with the order from Higman's lemma). Here $\kappa$ implements the usual recursive construction that builds a tree $\kappa(\sigma)$ from the list~$\sigma$ of its immediate subtrees. The equivalence above says that a tree $\kappa(\sigma)$ embeds into~$\kappa(\tau)$ when the immediate subtrees~$\sigma$ embed into corresponding subtrees~$\tau$ or the entire tree $\kappa(\sigma)$ embeds into some subtree~$x\in\supp_X(\tau)$. In~\cite[Section~3]{frw-kruskal} it is shown that each normal $\po$-dilator~$W$ has an initial Kruskal fixed point, which is unique up to isomorphism and will be denoted by~$\mathcal T W$.

\begin{definition}
By the uniform Kruskal theorem for a collection $\Gamma$ of $\po$-dilators, we mean the statement that $\mathcal T W$ is a well partial order for every normal $W\in\Gamma$.
\end{definition}

Let us stress that normality is assumed by default (and will not be mentioned explicitly), as the very construction of~$\mathcal T W$ may fail when $W$ is not normal. The uniform Kruskal theorem is true when $\Gamma$ is the collection of all $\wpo$-dilators, which is the case studied in~\cite{frw-kruskal}. When $W$ is no \mbox{$\wpo$-}dilator, $\mathcal T W$ may fail to be a well partial order (consider constant~$W$ with empty supports). To get the original Kruskal theorem for trees, one declares that $\Gamma$ contains the single $\wpo$-dilator that maps~$X$ to the sequences in~$X$.

We now recall a more concrete approach to $\po$-dilators, which allows for a formalization in reverse mathematics. Let $\po_0\subseteq\po$ contain a unique representative from each isomorphism class of finite partial orders. For each such order~$a$, we pick an isomorphism $\en_a:|a|\to a$ with~$|a|\in\po_0$. When $a$ is a finite suborder of some~$X$, we define $\en_a^X:=\iota\circ\en_a$ with $\iota:a\hookrightarrow X$. Due to the support condition from Definition~\ref{def:po-dilator}, each $\sigma\in W(X)$ has a unique normal form
\begin{equation*}
\sigma\nf W(\en_a^X)(\sigma_0)\quad\text{with}\quad a=\supp_X(\sigma),
\end{equation*}
where we have $\sigma_0\in W(|a|)$. By the naturality of supports, we get
\begin{equation*}
a=\supp_X(\sigma)=[\en_a^X]^{<\omega}\circ\supp_{|a|}(\sigma_0)\quad\Leftrightarrow\quad |a|=\supp_{|a|}(\sigma_0).
\end{equation*}
This motivates the following analogue of a definition due to Girard~\cite{girard-pi2}.

\begin{definition}\label{def:trace}
The trace of a $\po$-dilator~$W$ is given by
\begin{equation*}
\tr(W):=\{(c,\rho)\,|\,c\in\po_0\text{ and }\rho\in W(c)\text{ with }\supp_c(\rho)=c\}.
\end{equation*}
We say that~$W$ is finite if the same holds for~$\tr(W)$.
\end{definition}

By the above, the elements of~$W(X)$ can be uniquely represented by pairs $(a,\sigma_0)$ with \mbox{$a\in[X]^{<\omega}$} and $(|a|,\sigma_0)\in\tr(W)$. One can infer that~$W$ is determined (up to isomorphism) by its restriction to~$\po_0$. Assuming that $\tr(W)$ is countable, this makes it possible to represent~$W$ in the framework of reverse mathematics (see~\cite{frw-kruskal} for details). Now consider an initial Kruskal fixed point $\kappa:W(\mathcal T W)\to\mathcal T W$. Given $a\in[\mathcal T W]^{<\omega}$ and $(|a|,\sigma_0)\in\tr(W)$, we abbreviate
\begin{equation*}
\circ(a,\sigma_0):=\kappa(\sigma)\quad\text{for}\quad\sigma\nf W(\en_a^{\mathcal T W})(\sigma_0).
\end{equation*}
Since $\kappa$ is bijective, each element of~$\mathcal T W$ can be uniquely written as~$\circ(a,\sigma_0)$. Also, the fact that $\kappa$ is initial ensures that we get a height function
\begin{equation*}
h:\mathcal T W\to\mathbb N\quad\text{with}\quad h\big(\circ(a,\sigma_0)\big)=\max\big(\{0\}\cup\{h(t)+1\,|\,t\in a\}\big).
\end{equation*}
This suggests a recursive construction of $\mathcal T W$ as a set of terms, which is available in the weak base theory~$\mathsf{RCA_0}$ from reverse mathematics. The latter shows that $\mathcal T W$ is an initial Kruskal fixed point, as verified in~\cite[Section~3]{frw-kruskal}. At the same time, the principle that initial Kruskal fixed points are well partial orders is very strong, by the following theorem that was already mentioned above. We recall~that the ascending descending sequence principle ($\ads$) asserts that every infinite linear order contains an infinite sequence that is either strictly increasing or strictly decreasing. In~\cite{frw-kruskal}, the theorem was stated with the chain antichain principle ($\mathsf{CAC}$) at the place of~$\mathsf{ADS}$. The latter is weaker (see~\cite{lerman-solomon-towsner}) and suffices according to Remark~\ref{rmk:ads} below.

\begin{theorem}[\cite{frw-kruskal}]\label{thm:unif-kruskal}
The following are equivalent over $\mathsf{RCA_0}+\ads$:
\begin{enumerate}[label=(\roman*)]
\item the uniform Kruskal theorem for $\wpo$-dilators,
\item the principle of $\Pi^1_1$-comprehension.
\end{enumerate}
\end{theorem}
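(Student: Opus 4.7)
The plan is to prove each implication separately.

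For~(ii)~$\To$~(i), I would proceed by a minimal bad sequence argument enabled by $\Pi^1_1$-comprehension. Suppose some normal $\wpo$-dilator~$W$ admits a bad sequence in~$\mathcal T W$. Using $\Pi^1_1$-CA to form the set of terms from which a bad sequence can start, one may minimise along the height function~$h$ to obtain a bad sequence $t_n=\circ(a_n,\sigma_n)$ whose entries have minimal height at each stage. The standard Nash--Williams argument then shows that $Y:=\bigcup_n a_n\subseteq\mathcal T W$ admits no bad sequence of its own and hence is a wpo; since $W$ is a $\wpo$-dilator, so is $W(Y)$. Normality of~$W$ then converts the sequence $(t_n)$ into a bad sequence $W(\en_{a_n}^Y)(\sigma_n)\in W(Y)$, contradicting that $W(Y)$ is a wpo.

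For~(i)~$\To$~(ii), the task is to deduce $\Pi^1_1$-comprehension from the uniform Kruskal theorem. It suffices, by standard coding, to compute, for an arbitrary sequence $(L_n)_{n\in\n}$ of countable linear orders, the set $\{n\in\n : L_n\in\wo\}$. The strategy is to construct, uniformly and recursively in the $L_n$, a normal $\wpo$-dilator $W=W[(L_n)]$ whose initial Kruskal fixed point $\mathcal T W$ encodes a Bachmann--Howard-like collapsing hierarchy indexed by the~$L_n$: ill-foundedness of some $L_n$ should transport to a bad sequence in $\mathcal T W$, whereas when $\mathcal T W$ is assumed to be a wpo, an $\ads$-linearisation of it should permit $\Delta^0_1$-recognition of the set of well-founded~$L_n$. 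The role of~$\ads$ here is precisely to supply, from the wpo~$\mathcal T W$, a chain along which this recognition can be carried out; the remark announced after the theorem shows that $\ads$ suffices in place of the $\cac$ used in~\cite{frw-kruskal}, because the argument only needs increasing enumerations of certain subsets rather than a full chain/antichain dichotomy.

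The main obstacle is the construction of~$W$ in the second direction. It must be shown to be normal, to preserve embeddings, to satisfy the support reflection property of Definition~\ref{def:po-dilator}, and to output a wpo on every wpo input; verifying these conditions uniformly in the $L_n$ inside the weak base theory $\rca+\ads$ requires avoiding $\Sigma^0_2$-induction and arithmetical comprehension in the verification itself. The dilator~$W$ is essentially a reverse-mathematical repackaging of the collapsing functions behind the ordinal analysis of~$\Pi^1_1\text{-CA}_0$, and the principal work is to check that the coded trace $\tr(W)$ admits a primitive recursive presentation whose initial Kruskal fixed point lets one read off the well-foundedness status of each~$L_n$ by a purely arithmetical procedure, applied to the $\ads$-chain in~$\mathcal T W$.
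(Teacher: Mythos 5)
First, note that the paper does not reprove Theorem~\ref{thm:unif-kruskal}: the equivalence over $\rca_0+\cac$ is imported from~\cite{frw-kruskal}, and the paper's own contribution here is only Remark~\ref{rmk:ads}, which shows that $\cac$ may be weakened to $\ads$. Your sketch of (ii)$\To$(i) does match the known proof in outline (a Nash--Williams minimal bad sequence argument, formalized via $\Pi^1_1$-comprehension, which reduces a bad sequence in $\tw$ to one in $W(Y)$ for the well partial order~$Y$ of proper subterms), although the final contradiction comes from the equivalence in Definition~\ref{def:Kruskal-fp} and the embedding $W(Y)\to W(\tw)$ rather than from normality as such.

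The genuine gap is in (i)$\To$(ii). The entire mathematical content of that direction is the construction of the relevant dilator and the verification that well partial orderedness of its Kruskal fixed point yields $\Pi^1_1$-comprehension; you explicitly defer exactly this (``the principal work is to check that\dots''), so nothing is actually established. Moreover, your account of where $\ads$ enters does not match the real argument. The reversal in~\cite{frw-kruskal} does not read off $\{n\,:\,L_n\in\wo\}$ from an $\ads$-chain inside $\tw$; it reduces the statement to the Bachmann--Howard principle for dilators on linear orders, which had previously been shown equivalent to $\Pi^1_1$-comprehension, and $\cac$ is needed only to certify that certain constructed transformations are $\wpo$-dilators (closure of well partial orders under products). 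The weakening to $\ads$ in Remark~\ref{rmk:ads} proceeds by a bootstrap that your sketch misses: $\ads$ suffices to show that the specific dilators $W_\alpha(X)=1+\alpha\times X$ preserve well partial orders when $\alpha$ is a well order (one extracts a weakly increasing subsequence of the $\alpha$-components), so instance~(i) already gives that $\alpha^{<\omega}$ is a well partial order for every well order~$\alpha$; by the Girard--Hirst result this yields arithmetical comprehension and hence $\cac$, and only then does the original argument of~\cite{frw-kruskal} take over. Your proposal would need to be reorganized along these lines, and the dilator construction supplied in full, before it could count as a proof.
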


In the first half of the present paper, we study the uniform Kruskal theorem for $\po$-dilators that are finite. The initial Kruskal fixed points of these $\po$-dilators may be seen as finitely generated data types, since we can view $\circ(a,\sigma_0)\in\mathcal T W$ as the value of the constructor~$\sigma_0$ on arguments~$a$. To summarize our results, we introduce one further notion:

\begin{definition}
For quasi embeddings $f,g:X\to Y$ between partial orders, we write $f\leq g$ if we have $f(x)\leq_Y g(x)$ for all~$x\in X$. A $\po$-dilator~$W$ is called monotone if $f\leq g$ entails $W(f)\leq W(g)$.
\end{definition}

As we show in the next section, each $\wpo$-dilator is monotone, provably in~$\rca_0$. Conversely, any finite monotone $\po$-dilator is a $\wpo$-dilator, and this is equivalent to the statement that $X\mapsto X^n$ preserves well partial orders for all numbers $n\in\mathbb N$. Essentially over~$\rca_0$ (but modulo a result that Rathjen and Weiermann~\cite{rathjen-weiermann-kruskal} prove over~$\mathsf{ACA_0}$), we show that the uniform Kruskal theorem for $\po$-dilators that are both finite and monotone is equivalent to the original Kruskal theorem for trees (which is considerably weaker than the statements from Theorem~\ref{thm:unif-kruskal}). The proof idea comes~from the final section of Kruskal's original paper~\cite{kruskal60}, but one needs to show that it applies in the general setting of dilators. This is not self-evident, as related statements remain strong under finiteness conditions (consider binary trees with gap condition in~\cite{MRW-Bachmann}, weakly finite dilators in~\cite{apw-fast-growing} and the paragraph after Corollary~\ref{cor:unif-fin-trees}~below).

In the second half of our paper, we investigate versions of the uniform Kruskal theorem in the absence of~$\ads$. Let us write $\psi$ for the uniform Kruskal theorem for \mbox{$\wpo$-}dilators~$W$ such that $W(1)$ is finite (where $1$ is the order with a single element). As we show in Section~\ref{sect:without-ADS}, this variant leads to the following surprising dichotomy: On the one hand, $\psi$ does still imply $\Pi^1_1$-comprehension over~$\rca_0+\cac$. On the other hand, any \mbox{$\Pi^1_2$-}theorem of $\rca_0+\psi$ can already be proved in a theory that is weaker than $\aca_0$ (and hence much weaker than $\Pi^1_1$-comprehension). As a consequence, $\rca_0$~cannot prove that the original Kruskal theorem for binary trees follows from the uniform Kruskal theorem for finite $\wpo$-dilators, in contrast with the previous paragraph (note that the finite $\wpo$-dilators and the finite monotone $\po$-dilators do not coincide over $\rca_0$). We do not know whether a similar dichotomy holds in the case of Theorem~\ref{thm:unif-kruskal}. At any rate, it seems remarkable that some natural version of the uniform Kruskal theorem can have either predicative or impredicative strength, depending on weak principles in the base theory.

\subsection*{Acknowledgements} A variant of Theorem~\ref{thm:fin-uniform-original} had been proved in joint work with Michael Rathjen and Andreas Weiermann~\cite{frw-ccc-talk}. We are grateful that the latter have allowed us to adapt the result for the present paper. We also thank Gabriele Buriola and Peter Schuster, who pursue a related project and have pointed out the original formulation of a result due to G.~Higman (cf.~the paragraph before Corollary~\ref{cor:unif-fin-trees}). Furthermore, we thank Leszek Ko{\l}odziejczyk for helpful suggestions on the reverse mathematics of~$\ads$.

\section{Trees and finitely generated data types}\label{sect:fin-uniform-original}

In this section, we prove an equivalence between the usual Kruskal theorem for trees and the uniform Kruskal theorem for finite monotone $\po$-dilators. Before, we show that a finite $\po$-dilator is monotone precisely if it is a $\wpo$-dilator.

The first implication holds over the usual weak base theory. We note that Girard has proved the analogous result for dilators on linear orders, using the assumption that $\omega^\omega$ is well founded (see~\cite[Proposition~2.3.10]{girard-pi2} and also~\cite[Lemma~5.3]{frw-kruskal}).

\begin{proposition}\label{prop:wpo-monotone}
Any $\wpo$-dilator is monotone, provably in~$\rca_0$.
\end{proposition}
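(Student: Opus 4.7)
The plan is to argue by contraposition: assume $W$ fails to be monotone and construct a bad sequence in $W(Z)$ for some well partial order $Z$, contradicting that $W$ is a $\wpo$-dilator. Suppose there are quasi embeddings $f,g\colon X\to Y$ with $f\leq g$ and an element $\sigma\in W(X)$ with $W(f)(\sigma)\not\leq_{W(Y)}W(g)(\sigma)$. Applying the support clause of Definition~\ref{def:po-dilator} to the inclusions $\supp_X(\sigma)\hookrightarrow X$ and $\rng(f)\cup\rng(g)\hookrightarrow Y$, I can assume without loss of generality that $X$ and $Y$ are finite: both inclusions are embeddings, so $W$ sends them to embeddings that reflect order, and the failure of the inequality transfers. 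Both $X$ and $Y$ are now trivially well partial orders.

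Next I would build a WPO that packages infinitely many independent witnesses of the $(f,g)$-configuration. Following the spirit of Girard's linear-order argument, I take $Z$ to be the ordinal sum $Y\cdot\omega$, namely the disjoint union of copies $Y_0,Y_1,\ldots$ of $Y$ in which every element of $Y_n$ is strictly below every element of $Y_m$ whenever $n<m$. Provably in $\rca_0$, $Z$ is a WPO: an infinite sequence either revisits some $Y_n$ infinitely often, in which case the finiteness of $Y$ supplies a good pair, or it passes through infinitely many distinct copies, in which case any two indices falling into $Y_n$ and $Y_m$ with $n<m$ already yield one. Let $\iota_n\colon Y\hookrightarrow Z$ be the embedding into the $n$-th copy and set $h_{2n}:=\iota_n\circ f$, $h_{2n+1}:=\iota_n\circ g$. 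Pointwise we have $h_0\leq h_1\leq h_2\leq\cdots$, using $f\leq g$ within a copy and the cross-copy order between consecutive copies.

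Consider the candidate sequence $\tau_k:=W(h_k)(\sigma)\in W(Z)$. Within a single copy, the comparison $\tau_{2n}\leq\tau_{2n+1}$ is equivalent to $W(f)(\sigma)\leq W(g)(\sigma)$, because both sides lie in the image of the order-reflecting embedding $W(\iota_n)$; hence $\tau_{2n}\not\leq\tau_{2n+1}$ by hypothesis. The main obstacle is then to rule out cross-copy comparisons $\tau_i\leq\tau_j$ with $i,j$ in different copies. Here normality and the support condition alone do not suffice, because in the ordinal sum any support in a lower copy is trivially dominated by any support in a higher copy. I would address this by producing, for each hypothetical bad pair, an embedding $r$ from the finite subposet of $Z$ spanned by $\supp(\tau_i)\cup\supp(\tau_j)$ into $Y$ that sends $h_i(x)$ to $f(x)$ and $h_j(x)$ to $g(x)$; applying $W(r)$, which preserves order since $W$ preserves embeddings, would pull any such $\tau_i\leq\tau_j$ back to the forbidden $W(f)(\sigma)\leq W(g)(\sigma)$. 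Producing these retracts--possibly after adjusting the shape of $Z$ and the $h_n$ so that enough room is available, or else replacing this step by a Nash--Williams minimal-bad-sequence reduction to the already-handled consecutive case--is the delicate combinatorial step. Once the sequence is genuinely bad, it contradicts $W(Z)$ being a WPO; since all posets involved are finite or finitely presented, the whole argument is formalizable in $\rca_0$.
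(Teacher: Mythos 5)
Your reduction to finite $X$ and $Y$ via supports is sound and matches the second half of the paper's argument, but the core of your proof has a genuine gap that the proposed repairs do not close. The problem is exactly the cross-copy comparison you flag. In the ordinal sum $Z=Y\cdot\omega$, every element of a lower copy lies strictly below every element of a higher copy, so the map $r$ you want (sending $h_i(x)\mapsto f(x)$ and $h_j(x)\mapsto g(x)$ for $i<j$ in different copies) cannot be an embedding: it would have to preserve the relations $h_i(x)<_Z h_j(x')$, i.\,e.\ yield $f(x)\leq_Y g(x')$ for \emph{all} $x,x'$, which fails in general. At best $r$ is a quasi embedding, and then $W(r)$ only \emph{reflects} order, whereas you need it to \emph{preserve} order in order to push $\tau_i\leq_{W(Z)}\tau_j$ forward to $W(f)(\sigma)\leq W(g)(\sigma)$. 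The fallback via a Nash--Williams minimal-bad-sequence argument is also unavailable, since that method is not formalizable in $\rca_0$ (it is essentially tied to $\Pi^1_1$-comprehension). So the bad sequence is never actually produced, and the contradiction is not reached.

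The paper avoids this by turning the argument around: instead of trying to make \emph{every} pair bad, it only needs \emph{one} good pair, and it arranges matters so that a good pair can be pulled back through a quasi embedding, which reflects order --- the direction that is actually available. Concretely, for finite $X=\{x_0,\ldots,x_{n-1}\}$ it maps into $n\times\mathbb N$ with the first factor \emph{discrete}, via $h_k(x_i):=(i,0)$ if $f(x_i)=g(x_i)$ and $h_k(x_i):=(i,k)$ otherwise; since $W(n\times\mathbb N)$ is a well partial order, some $k<l$ give $W(h_k)(\sigma)\leq W(h_l)(\sigma)$, and a quasi embedding $h:Y\to n\times\mathbb N$ with $h\circ f=h_k$ and $h\circ g=h_l$ (whose existence uses $f\leq g$ together with two auxiliary injectivity and surjectivity conditions on $f$ and $g$, arranged beforehand by a further reduction) transfers this back to $W(f)(\sigma)\leq W(g)(\sigma)$. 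The incomparability of distinct columns is essential: it is what lets $h$ be a quasi embedding, in contrast with your $Y\cdot\omega$, where the global comparability between copies destroys exactly the information you need. To salvage your outline, you would have to replace the ordinal sum by this product and replace the contraposition by the direct search for a good pair.
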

\begin{proof}
We identify $n\in\mathbb N$ with the discrete partial order $\{0,\ldots,n-1\}$ and equip $n\times\mathbb N$ with the usual product order, so that $(c,k)\leq(d,l)$ holds precisely when we have $c=d$ and $k\leq l$ as natural numbers. To see that $\rca_0$ recognizes $n\times\mathbb N$ as a well partial order, we consider an infinite sequence
\begin{equation*}
(c_0,k_0),(c_1,k_1),\ldots\subseteq n\times\mathbb N
\end{equation*}
and assume that it is bad, i.\,e., that $(c_i,k_i)\not\leq(c_j,k_j)$ holds for all~$i<j$. For fixed~$i$, there are only finitely many $(c_j,k_j)$ with $k_j<k_i$. After passing to a~recursive sub\-sequence, we may thus assume that $i\mapsto k_i$ is increasing. But we easily find indices $i<j$ with $c_i=c_j$. This yields ${(c_i,k_i)\leq (c_j,k_j)}$, against our assumption. Given a $\wpo$-dilator~$W$, we conclude that $W(n\times\mathbb N)$ is a well partial order for all~$n\in\mathbb N$. To deduce that $W$ is monotone, we consider two quasi embeddings ${f,g:X\to Y}$ with~$f\leq g$. We first prove $W(f)\leq W(g)$ under the following assumptions:
\begin{enumerate}[label=(A\arabic*)]
\item the partial order~$X$ is finite,
\item any element of~$Y$ lies in the range of~$f$ or~$g$,
\item we have $f(x)\neq g(x')$ for $x\neq x'$.
\end{enumerate}
In view of~(A1) we fix an enumeration of the $n$-element set $X=\{x_0,\ldots,x_{n-1}\}$. For each $k\in\mathbb N$ we have a quasi embedding
\begin{equation*}
h_k:X\to n\times\mathbb N\quad\text{with}\quad h_k(x_i):=\begin{cases}
(i,0) & \text{if }f(x_i)=g(x_i),\\
(i,k) & \text{otherwise}.
\end{cases}
\end{equation*}
Given $\sigma\in W(X)$, we find $k<l$ with $W(h_k)(\sigma)\leq W(h_l)(\sigma)$ in $W(n\times\mathbb N)$, as the latter is a well partial order. Let us recall that any quasi embedding is injective (use antisymmetry in~$X$ to derive $x=x'$ from $f(x)=f(x')$). Combined with conditions (A2) and~(A3), this allows us to consider
\begin{equation*}
h:Y\to n\times\mathbb N\quad\text{with}\quad h(y):=\begin{cases}
(i,0) & \text{if }y=f(x_i)=g(x_i),\\
(i,k) & \text{if }y=f(x_i)\neq g(x_i),\\
(i,l) & \text{if }y=g(x_i)\neq f(x_i).
\end{cases}
\end{equation*}
To show that $h$ is a quasi embedding, we point out that $h(y)=(i,k)\leq (i,l)=h(y')$ entails $y=f(x_i)\leq g(x_i)=y'$, where the inequality relies on~$f\leq g$. Now observe
\begin{equation*}
W(h\circ f)(\sigma)=W(h_k)(\sigma)\leq W(h_l)(\sigma)=W(h\circ g)(\sigma).
\end{equation*}
Since $W(h)$ is a quasi embedding, we get $W(f)(\sigma)\leq W(g)(\sigma)$, as required. Let us now consider quasi embeddings $f,g:X\to Y$ with $f\leq g$ that validate~(A3) but not necessarily (A1) and~(A2). Given $\sigma\in W(X)$, we set
\begin{equation*}
X_0:=\supp_X(\sigma)\subseteq X\quad\text{and}\quad Y_0:=[f]^{<\omega}(X_0)\cup[g]^{<\omega}(X_0)\subseteq Y.
\end{equation*}
Write~$\iota_X:X_0\hookrightarrow X$ and $\iota_Y:Y_0\to Y$ for the inclusions. We define ${f_0,g_0:X_0\to Y_0}$ by stipulating $\iota_Y\circ f_0=f\circ\iota_X$ and $\iota_Y\circ g_0=g\circ\iota_X$. This yields quasi embeddings ${f_0\leq g_0}$ that validate (A1) to~(A3). Due to the support condition from Definition~\ref{def:po-dilator}, we may write $\sigma=W(\iota_X)(\sigma_0)$ with $\sigma_0\in W(X_0)$. By the result for $f_0,g_0$ and the fact that $W(\iota_Y)$ is an embedding (also by Definition~\ref{def:po-dilator}), we get
\begin{equation*}
W(f)(\sigma)=W(f\circ\iota_X)(\sigma_0)=W(\iota_Y\circ f_0)(\sigma_0)\leq W(\iota_Y\circ g_0)(\sigma_0)=W(g)(\sigma).
\end{equation*}
Finally, we also drop~(A3) and consider arbitrary quasi embeddings $f,g:X\to Y$ with ${f\leq g}$. Let us equip $Y\times 2$ with the lexicographic order, in which we have $(y,0)<(y,1)$ and ${(y,i)<(y',j)}$ whenever $y<y'$ holds in~$Y$ (note that~$2$ is no longer discrete). We consider the embedding~${\iota:Y\to Y\times 2}$ with $\iota(y):=(y,0)$ and the quasi embedding
\begin{equation*}
f^+:X\to Y\times 2\quad\text{with}\quad f^+(x):=\begin{cases}
(f(x),0) & \text{if }f(x)=g(x),\\
(f(x),1) & \text{otherwise}.
\end{cases}
\end{equation*}
Note that we have $\iota\circ f\leq f^+$ and that~(A3) holds for $\iota\circ f$ and $f^+$. To see that we have $f^+\leq\iota\circ g$, assume $f(x)\neq g(x)$ and note that $f\leq g$ forces $f(x)<g(x)$, so that we get
\begin{equation*}
f^+(x)=(f(x),1)<(g(x),0)=\iota\circ g(x).
\end{equation*}
Furthermore, $f^+$ and $\iota\circ g$ validate~(A3). Indeed, if we have
\begin{equation*}
f^+(x)=(f(x),i)=(g(x'),0)=\iota\circ g(x'),
\end{equation*}
we get $i=0$ and hence $f(x)=g(x)$, which yields~$x=x'$ by the injectivity of~$g$. For $\sigma\in W(X)$, the result under~(A3) will now yield
\begin{equation*}
W(\iota\circ f)(\sigma)\leq W(f^+)(\sigma)\leq W(\iota\circ g)(\sigma).
\end{equation*}
As $W(\iota)$ is a (quasi) embedding, we once again get $W(f)(\sigma)\leq W(g)(\sigma)$.
\end{proof}

For the case of dilators on linear orders, Girard has also proved a converse result (see~\cite[Proposition~4.3.8]{girard-pi2}). We establish the analogue for partial orders and include a reversal. Let us point out that $X^n$ carries the usual product order, in which we have $\langle x_0,\ldots,x_{n-1}\rangle\leq\langle x'_0,\ldots,x'_{n-1}\rangle$ precisely when $x_i\leq x'_i$ holds for all $i<n$. The following statement~(ii) is known to entail both $\ads$ and $\Sigma^0_2$-induction (see the remark before Theorem~2.22 of~\cite{marcone-wqo-bqo} and Theorem~5 of~\cite{uftring-etr}). 

\begin{proposition}\label{prop:monotone-to-wpo}
The following are equivalent over $\rca_0$:
\begin{enumerate}[label=(\roman*)]
\item any finite monotone $\po$-dilator is a $\wpo$-dilator,
\item if $X$ is a well partial order, then so is $X^n$ for every~$n\in\mathbb N$.
\end{enumerate}
\end{proposition}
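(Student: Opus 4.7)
The plan is to exploit the normal-form representation of elements of $W(X)$ in both directions. For (ii)$\To$(i), finiteness of the trace lets the infinite pigeonhole principle fix the combinatorial shape of a putative bad sequence; what remains is an infinite list of tuples of labels whose badness in $W(X)$ converts into badness in some $X^n$ via monotonicity. For (i)$\To$(ii), I would observe that the functor $X\mapsto X^n$ is itself a finite monotone $\po$-dilator, so~(i) applies directly.

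For (ii)$\To$(i), let $W$ be a finite monotone $\po$-dilator, $X$ a well partial order and $(\sigma_i)_{i\in\n}$ any infinite sequence in $W(X)$. Writing each $\sigma_i\nf W(\en_{a_i}^X)(\rho_i)$ with $(|a_i|,\rho_i)\in\tr(W)$, the infinite pigeonhole principle for finite colourings (available in $\rca_0$) yields an infinite subsequence along which $(|a_i|,\rho_i)$ takes a constant value $(c,\rho)$. Setting $n:=|c|$ and viewing $c$ as a partial order on $\{0,\ldots,n-1\}$, the maps $e_i:=\en_{a_i}^X\colon c\to X$ are embeddings and hence correspond to tuples $(e_i(0),\ldots,e_i(n-1))\in X^n$. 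Hypothesis~(ii) supplies indices $i<j$ in the subsequence with $e_i(k)\leq_X e_j(k)$ for all $k<n$, i.\,e., $e_i\leq e_j$ in the sense of the order on quasi embeddings. Monotonicity of~$W$ then gives $\sigma_i=W(e_i)(\rho)\leq_{W(X)}W(e_j)(\rho)=\sigma_j$, refuting badness.

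For (i)$\To$(ii), I would fix $n\in\n$ and define $W_n\colon\po\to\po$ by $W_n(X):=X^n$ with coordinatewise action on morphisms and $\supp_X(x_0,\ldots,x_{n-1}):=\{x_0,\ldots,x_{n-1}\}$. Routine checks confirm that $W_n$ is a $\po$-dilator that is normal (each coordinate of a tuple is visible in the support of any larger tuple) and monotone (pointwise). The trace $\tr(W_n)$ is finite, since its elements are pairs $(c,\rho)$ with $|c|\leq n$, and only finitely many such $c\in\po_0$ and $\rho\in c^n$ exist. Applying~(i) to $W_n$ yields that $W_n(X)=X^n$ is a well partial order whenever $X$ is.

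The one point that requires care is the translation in (ii)$\To$(i) from a constant trace to a tuple in $X^n$: one must carefully distinguish between $a_i\subseteq X$ (a finite suborder) and $|a_i|=c$ (its canonical representative in $\po_0$), so that the enumeration used to form the tuple is the intrinsic one coming from $\en_{a_i}$ rather than an ad hoc choice. Once this bookkeeping is in place, monotonicity converts pointwise comparisons of embeddings into comparisons inside $W(X)$, and the remainder of the argument is a straightforward application of the hypotheses.
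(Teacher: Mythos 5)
Your direction (i)$\To$(ii) is correct and coincides with the paper's argument: one checks that $X\mapsto X^n$ with coordinatewise action and supports is a normal monotone $\po$-dilator whose trace is finite because $(c,\rho)\in\tr(W)$ forces $|c|\leq n$. The problem lies in (ii)$\To$(i). You invoke the infinite pigeonhole principle to thin a bad sequence to an infinite subsequence on which the trace element $(|a_i|,\rho_i)$ is constant, and you assert that this principle is ``available in $\rca_0$''. It is not, in the generality you need: the number of colours is $|\tr(W)|$, which depends on the dilator and can be any (possibly nonstandard) finite cardinality, so the step is an instance of $\mathsf{RT}^1_{<\infty}$, which is equivalent to $\mathsf{B}\Sigma^0_2$ over $\rca_0$ and hence unprovable there. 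Only the case of a fixed standard number of colours comes for free. The paper is quite deliberate about this distinction: the argument you propose (normal forms, pigeonhole on the finite trace, then monotonicity) is precisely the one carried out in the proof of Theorem~\ref{thm:main}(b), where the infinite pigeonhole principle is explicitly \emph{added} to the base theory.

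The paper's proof of Proposition~\ref{prop:monotone-to-wpo} avoids the thinning step altogether. Instead of first fixing the trace element and then comparing tuples in $X^n$, it builds the trace element into an extra coordinate: it constructs a single quasi embedding $g:W(X)\to(\tr(W)+X)^{n+1}$, where $\tr(W)$ carries the discrete order, with $g(\sigma)$ listing the images $\en_a^X(|a|[i])$ followed by the pair $(|a|,\sigma_0)$ in the last slot. One application of~(ii) to this product yields a good pair, and discreteness of the trace coordinate forces the two normal forms to have the same $(|a|,\sigma_0)$, after which monotonicity finishes the argument exactly as in your write-up. Your proof could be repaired less elegantly by observing that statement~(ii) itself implies $\ads$ (as the paper notes just before the proposition), and $\ads$ yields the infinite pigeonhole principle over $\rca_0$; but as written, the appeal to pigeonhole over bare $\rca_0$ is a genuine gap.
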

\begin{proof}
To see that~(i) implies~(ii), we consider the monotone $\po$-dilator~$W$ with $W(X):=X^n$ and
\begin{align*}
W(f)(\langle x_0,\ldots,x_{n-1}\rangle)&:=\langle f(x_0),\ldots,f(x_{n-1})\rangle,\\
\supp_X(\langle x_0,\ldots,x_{n-1}\rangle)&:=\{x_0,\ldots,x_{n-1}\}.
\end{align*}
It suffices to show that (the trace of)~$W$ is finite. Let us recall that $(c,\rho)\in\tr(W)$ amounts to $\rho\in W(c)$ and $\supp_c(\rho)=c\in\po_0$. The latter entails that $c$ can have at most $n$ elements. But then there are only finitely many possibilities for~$c$, since $\po_0$ contains a single element from each isomorphism class of finite partial orders. Together with the fact that $W(c)$ is finite for each finite~$c$, this yields the claim. For the converse direction, let~$W$ be any finite monotone $\po$-dilator. To view finite partial orders as sequences, we fix a bijection
\begin{equation*}
\{0,\ldots,\len(c)-1\}\ni i\mapsto c[i]\in c
\end{equation*} 
with $\len(c)\in\mathbb N$ for each~$c\in\po_0$. We then pick an $n\in\mathbb N$ such that $(c,\rho)\in\tr(W)$ entails $\len(c)\leq n$. Given a well partial order~$X$, we consider
\begin{equation*}
\tr(W)+X:=\{(0,\xi)\,|\,\xi\in\tr(W)\}\cup\{(1,x)\,|\,x\in X\}
\end{equation*} 
with the discrete order on~$\tr(W)$ and the usual partial order on the sum, so that the only strict inequalities are $(1,x)<(1,x')$ for $x<x'$ in~$X$. Since this yields a well partial order, it suffices to construct a quasi embedding
\begin{equation*}
g:W(X)\to(\tr(W)+X)^{n+1}.
\end{equation*}
As we have seen in the introduction, each element of $W(X)$ has a unique normal form ${\sigma\nf W(\en_a^X)(\sigma_0)}$ with $(|a|,\sigma_0)\in\tr(W)$, for a certain embedding $\en_a^X$ that maps $|a|\in\po_0$ onto~$a\subseteq X$. When $\sigma\in W(X)$ has normal form as given, we put
\begin{equation*}
g(\sigma):=(y_0,\ldots,y_n)\quad\text{with}\quad y_i:=\begin{cases}
\big(1,\en_a^X(|a|[i])\big) & \text{when $i<\len(|a|)$},\\
\big(0,(|a|,\sigma_0)\big) & \text{otherwise}.
\end{cases}
\end{equation*}
Let us note that $y_n$ is always determined by the second case, due to the choice of~$n$. Given ${g(\sigma)\leq g(\tau)}$, we thus obtain $\sigma\nf W(\en_a^X)(\rho)$ and $\tau\nf W(\en_b^X)(\rho)$ for a single~$\rho$ and with ${|a|=|b|}$. The components with index $i<\len(|a|)$ ensure that we have $\en_a^X\leq\en_b^X$. Now monotonicity yields $\sigma\leq\tau$, as required.
\end{proof}

To connect with the original Kruskal theorem, we define certain collections of finite ordered trees. In the following, $l\star\langle t_0,\ldots,t_{k-1}\rangle$ stands for the tree with root label~$l$ and immediate subtrees $t_i$ (which recursively yields labels at all vertices). Note that $m=1$ amounts to the case without labels (as a single label~$l=0$ has no~effect). For $n=\omega$, the condition~$k<n$ is equivalent to~$k\in\mathbb N$.

\begin{definition}\label{def:trees}
For $m\in\mathbb N$ and $n\in\mathbb N\cup\{\omega\}$, we declare that $\mathbb T^m_n$ is recursively generated as follows: Whenever $t_0,\ldots,t_{k-1}\in\mathbb T^m_n$ with $k<n$ are already constructed, we add a new element $l\star\langle t_0,\ldots,t_{k-1}\rangle\in\mathbb T^m_n$ for every~$l<m$. Let us put~$\mathbb T_n:=\mathbb T^1_n$.
\end{definition}

An embedding of a tree~$t$ into~$t'$ will either send the root to the root and immediate subtrees into corresponding subtrees or it will send all of~$t$ into a proper subtree of~$t'$. We also demand that labels are preserved. The embeddability relation~$\tl$ can thus be characterized as follows (where we write $[k]:=\{0,\ldots,k-1\}$).

\begin{definition}\label{def:tree-embedding}
For trees $l\star\tau,l'\star\tau'\in\mathbb T^m_n$ with both $\tau=\langle t_0,\ldots,t_{j-1}\rangle$ and $\tau'=\langle t'_0,\ldots,t'_{k-1}\rangle$, we recursively declare
\begin{align*}
l\star\tau\tl l'\star\tau'\quad&:\Leftrightarrow\quad(l=l'\text{ and }\tau\tl^\star\tau')\text{ or }l\star\tau\tl t'_i\text{ for some }i<k,\\
\tau\tl^\star\tau'\quad&:\Leftrightarrow\quad\begin{cases}
\text{there is a strictly increasing $f:[j]\to[k]$}\\
\text{with $t_i\tl t'_{f(i)}$ for all $i<j$}.
\end{cases}
\end{align*}
\end{definition}

We come to the main result of this section. The condition that branchings are bounded will be removed in a corollary below.

\begin{theorem}\label{thm:fin-uniform-original}
The following are equivalent over $\rca_0$:
\begin{enumerate}[label=(\roman*)]
\item the uniform Kruskal theorem for finite monotone $\po$-dilators,
\item the original Kruskal theorem for finite ordered trees with bounded branchings, which is the statement that $(\mathbb T_n,\tl)$ is a well partial order for all~$n\in\mathbb N$.
\end{enumerate}
\end{theorem}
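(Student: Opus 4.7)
The plan is to handle both directions via a specific finite monotone $\po$-dilator that captures ordered trees with bounded branching.

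\textbf{For (i) $\Rightarrow$ (ii):} Fix $n \in \mathbb{N}$ and define $W_n : \po \to \po$ by $W_n(X) := \bigsqcup_{k < n} X^k$ with the Higman-style order $(x_0, \ldots, x_{j-1}) \leq (y_0, \ldots, y_{k-1})$ iff some strictly increasing $f : [j] \to [k]$ satisfies $x_i \leq_X y_{f(i)}$ for all $i$, and with the support given by the set of entries. Routine verification shows that $W_n$ is a $\po$-dilator with finite trace (only $c \in \po_0$ of size $|c| < n$ can appear in $\tr(W_n)$) and is monotone (witnessed by the identity map $[j] \to [j]$). The key observation is that $\mathcal{T}W_n \cong (\mathbb{T}_n, \tl)$: an element $\circ(a, \sigma_0)$ corresponds to the tree whose ordered sequence of (possibly repeated) immediate subtrees is $W_n(\en_a^{\mathcal{T}W_n})(\sigma_0)$, and the Kruskal fixed point clause matches the two clauses of Definition~\ref{def:tree-embedding}. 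Applying (i) to $W_n$ therefore yields (ii).

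\textbf{For (ii) $\Rightarrow$ (i):} Given a finite monotone $\po$-dilator $W$, pick $n \in \mathbb{N}$ bounding $\len(|a|)$ over all $(|a|, \sigma_0) \in \tr(W)$, set $m := |\tr(W)|$, and fix an enumeration $\ell : \tr(W) \to \{0, \ldots, m-1\}$. I would construct a quasi embedding $\phi : \mathcal{T}W \to \mathbb{T}^m_n$ by recursion on the height function (available in $\rca_0$):
\[
\phi(\circ(a, \sigma_0)) := \ell(|a|, \sigma_0) \star \langle \phi(a[0]), \ldots, \phi(a[k-1]) \rangle,
\]
where $a[i] := \en_a^{\mathcal{T}W}(|a|[i])$ enumerates $a$ in the fixed order and $k = \len(|a|)$. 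The decisive step is to show $\phi$ reflects order. Suppose $\phi(t) \tl \phi(t')$ for $t = \circ(a, \sigma_0)$ and $t' = \circ(a', \sigma'_0)$. By Definition~\ref{def:tree-embedding}, either $\phi(t) \tl \phi(a'[i])$ for some $i$, in which case induction gives $t \leq a'[i] \in a' = \supp(\kappa^{-1}(t'))$ and the Kruskal fixed point clause yields $t \leq t'$; or the root labels agree and there is a strictly increasing $f : [k] \to [k']$ with $\phi(a[i]) \tl \phi(a'[f(i)])$ for all $i$. Equal labels force $(|a|, \sigma_0) = (|a'|, \sigma'_0)$, so $k = k'$ and $f$ must be the identity; induction then yields $a[i] \leq a'[i]$ for all $i < k$, i.e., $\en_a^{\mathcal{T}W} \leq \en_{a'}^{\mathcal{T}W}$ pointwise. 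Monotonicity of $W$ gives $W(\en_a^{\mathcal{T}W})(\sigma_0) \leq W(\en_{a'}^{\mathcal{T}W})(\sigma'_0)$ in $W(\mathcal{T}W)$, that is, $\kappa^{-1}(t) \leq \kappa^{-1}(t')$, and hence $t \leq t'$.

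To finish, I would invoke the reduction (due to Rathjen and Weiermann, available over $\aca_0$) deriving Kruskal's theorem for labeled ordered trees $\mathbb{T}^m_n$ from the unlabeled version $\mathbb{T}_N$ for sufficiently large $N$. Combined with (ii), this makes $\mathbb{T}^m_n$ a well partial order, and since $\phi$ quasi embeds $\mathcal{T}W$ into it, $\mathcal{T}W$ is itself a well partial order, yielding (i). The main obstacles are the careful bookkeeping in the isomorphism $\mathcal{T}W_n \cong \mathbb{T}_n$ (handling sequences with possible repetitions, which in normal form reside in $|a|$ rather than $a$) and the crucial use of monotonicity in the verification of $\phi$, which is exactly the hypothesis separating this result from the general $\wpo$-dilator case of Theorem~\ref{thm:unif-kruskal}.
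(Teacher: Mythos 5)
Your overall strategy coincides with the paper's: for (i)$\Rightarrow$(ii) you apply the uniform theorem to the dilator of short sequences with the Higman order and transfer along a map $\mathbb T_n\to\mathcal TW_n$ (the paper only needs and verifies a quasi embedding rather than your claimed isomorphism, but that is a harmless strengthening), and for (ii)$\Rightarrow$(i) you build the same quasi embedding $\phi:\mathcal TW\to\mathbb T^m_n$ by normal forms and use monotonicity in exactly the decisive step the paper uses.

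The one real issue is the final reduction from labelled to unlabelled trees. You invoke a Rathjen--Weiermann result ``available over $\aca_0$'' to get that $\mathbb T^m_n$ is a well partial order from statement~(ii). First, the Rathjen--Weiermann result that this paper actually cites concerns removing the \emph{branching bound} (the equivalence with $\mathbb T_\omega$ in Corollary~\ref{cor:unif-fin-trees}), not eliminating labels, so the citation does not obviously cover the step you need. Second, and more importantly, the theorem is claimed over $\rca_0$; if the label elimination is only available over $\aca_0$, your argument establishes the equivalence over $\aca_0$ rather than $\rca_0$. The paper closes this gap with an explicit elementary construction: labels $l<m$ are simulated by pairwise $\tl$-incomparable gadget trees $t(l):=t^{m-l}_{l+1}$ (full $(l+1)$-branching trees of height $m-l$), and a map $g:\mathbb T^m_n\to\mathbb T_{n+1}$ pads each node's list of children with copies of the gadget for its label; one then checks in $\rca_0$ that $g$ reflects $\tl$, using that an $n$-branching image $g(s)$ cannot embed into any gadget. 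To meet the stated base theory you need to supply this (or an equivalent) $\rca_0$ argument rather than outsource it.
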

\begin{proof}
To show that~(i) implies~(ii), we fix $n\in\mathbb N$ and put
\begin{equation*}
W(X):=\{\langle x_0,\ldots,x_{k-1}\rangle\,|\,x_i\in X\text{ for }i<k<n\}.
\end{equation*}
When $X$ is a partial order, we declare that $\langle x_0,\ldots,x_{j-1}\rangle\leq_{W(X)}\langle x'_0,\ldots,x'_{k-1}\rangle$ holds precisely if there is a strictly increasing $f:[j]\to[k]$ with $x_i\leq_X x'_{f(i)}$ for all~$i<j$. Let us define $W(f)$ and $\supp_X$ analogous to the proof of Proposition~\ref{prop:monotone-to-wpo}, to obtain a finite monotone $\po$-dilator that is clearly normal. It suffices to construct a quasi embedding of $\mathbb T_n$ into the initial Kruskal fixed point $\mathcal TW$ of~$W$. As seen in the introduction, the latter comes with a map $\kappa:W(\mathcal TW)\to\mathcal TW$. We consider
\begin{equation*}
f:\mathbb T_n\to\mathcal TW\quad\text{with}\quad f(0\star\langle t_0,\ldots,t_{k-1}\rangle):=\kappa(\langle f(t_0),\ldots,f(t_{k-1})\rangle),
\end{equation*}
which amounts to a recursion over trees. A straightforward induction over the number of vertices confirms that $f(t)\leq_{\mathcal T W} f(t')$ entails $t\tl t'$, as the disjuncts in the definition of~$\tl$ correspond to those in Definition~\ref{def:Kruskal-fp}. In order to prove that~(ii) implies~(i), we first show that it entails
\begin{enumerate}[label=(\roman*')]\setcounter{enumi}{1}
\item the relation $\tl$ is a well partial order on $\mathbb T^m_n$ for all~$m,n\in\mathbb N$.
\end{enumerate}
The idea is to simulate labels $l<m$ by trees $t(l)$ that are incomparable under~$\tl$. Write \mbox{$t^j_k\in\mathbb T_{k+1}$} for the full $k$-branching tree of height~$j$, given by $t^0_k:=0\star\langle\rangle$ and $t^{j+1}_k:=0\star\langle t_0,\ldots,t_{k-1}\rangle$ with $t_i=t^j_k$ for all~$i<k$. A straightforward induction shows that $t^i_k\tl t^j_l$ with $i,k>0$ entails $i\leq j$ and~$k\leq l$. Our incomparable trees can thus be given by $t(l):=t_{l+1}^{m-l}$ for~$l<m$. Aiming at~(ii'), we may assume~$m<n$, which ensures $t(l)\in\mathbb T_{n+1}$. Let us now define $g:\mathbb T^m_n\to\mathbb T_{n+1}$ by
\begin{equation*}
g(l\star\langle t_0,\ldots,t_{k-1}\rangle):=0\star\langle t'_0,\ldots,t'_{n-1}\rangle\quad\text{with}\quad t'_i:=\begin{cases}
g(t_i) & \text{for }i<k,\\
t(l) & \text{for }k\leq i<n.
\end{cases}
\end{equation*}
To show that $g(s)\tl g(t)$ entails $s\tl t$, we write $s=l\star\sigma$ and $t=l'\star\tau$ with $\sigma=\langle s_0,\ldots,s_{j-1}\rangle$ and~$\tau=\langle t_0,\ldots,t_{k-1}\rangle$. First assume that $g(s)\tl g(t)$ holds by the first disjunct from Definition~\ref{def:tree-embedding}. We then have $s'_i\tl t'_i$ for all $i<n$, with $s'_i$ and $t'_i$ as in the definition of~$g$. As the definition of~$\mathbb T^m_n$ ensures $j,k<n$, we can deduce $t(l)=s'_{n-1}\tl t'_{n-1}=t(l')$ and hence~$l=l'$. For $i<\min(j,k)$, we also obtain $g(s_i)=s'_i\tl t_i'=g(t_i)$, so that we may inductively infer~$s_i\tl t_i$. To get $\sigma\tl^\star\tau$ and with that~$s\tl t$, we need only show~$j\leq k$. If the latter was false, we would get
\begin{equation*}
g(s_k)=s'_k\tl t'_k=t(l)=t_{l+1}^{m-l}.
\end{equation*}
However, as $g(s_k)$ is $n$-branching with $n>m\geq l+1$, it is straightforward to refute $g(s_k)\tl t_{l+1}^i$ by induction on~$i$. If $g(s)\tl g(t)$ holds by the second disjunct from Definition~\ref{def:tree-embedding}, we have $g(s)\tl t'_i$ for some $i<n$. Given that $g(s)\tl t(l)$ was just refuted, we must have $i<k$ and $t'_i=g(t_i)$. So we inductively get $s\tl t_i$ and then $s\tl t$. To complete the proof, we show that~(ii') entails~(i). Given a finite monotone $\po$-dilator~$W$ that is normal, we fix an injection $e:\tr(W)\to[m]$ for a suitable~$m\in\mathbb N$. Recall the bijections $[\len(c)]\ni i\mapsto c[i]\in c$ for $c\in\po_0$ that were fixed in the proof of Proposition~\ref{prop:monotone-to-wpo}. As in the latter, each finite $a\subseteq\mathcal TW$ gives rise to an embedding~$\en^{\mathcal TW}_a$ with domain $|a|\in\po_0$ and range $a\subseteq\mathcal TW$. Let us pick an $n\in\mathbb N$ such that $(c,\rho)\in\tr(W)$ entails $\len(c)<n$. From the paragraph after Definition~\ref{def:trace}, we recall the notation $\circ(a,\sigma)$ for elements of~$\mathcal TW$ as well as the height function~$h:\mathcal TW\to\mathbb N$. The latter justifies the recursive definition of a function $j:\mathcal TW\to\mathbb T^m_n$ with
\begin{equation*}
j\big(\circ(a,\sigma)\big):=e(|a|,\sigma)\star\langle t_0,\ldots,t_{\len(|a|)-1}\rangle\quad\text{for}\quad t_i:=j\left(\en_a^{\mathcal TW}(|a|[i])\right).
\end{equation*}
In view of~\cite[Definition~3.4]{frw-kruskal}, this can be cast as a recursion over subterms, which permits formalization in~$\rca_0$. It remains to show that $j(s)\tl j(t)$ entails~$s\leq_{\mathcal TW}t$, for which we use induction on~$h(s)+h(t)$. We write $s=\circ(a,\sigma)$ and $t=\circ(b,\tau)$. First, we assume that the inequality $j(s)\tl j(t)$ holds by the first disjunct in Definition~\ref{def:tree-embedding}, so that $e(|a|,\sigma)=e(|b|,\tau)$ and $j\circ\en_a^{\mathcal TW}(|a|[i])\tl j\circ\en_b^{\mathcal TW}(|b|[i])$ for~\mbox{$i<\len(|a|)=\len(|b|)$}. Inductively, we get $\en_a^{\mathcal TW}\leq\en_b^{\mathcal TW}$. Due to $\sigma=\tau$, we can invoke monotonicity and the equivalence from Definition~\ref{def:Kruskal-fp} to conclude
\begin{equation*}
s=\circ(a,\sigma)=\kappa\left(W\left(\en_a^{\mathcal TW}\right)(\sigma)\right)\leq_{\mathcal TW}\kappa\left(W\left(\en_b^{\mathcal TW}\right)(\tau)\right)=\circ(b,\tau)=t.
\end{equation*}
Note that the equalities hold by the definition of the notation $\circ(a,\sigma)$. If $j(s)\tl j(t)$ holds by the second disjunct from Definition~\ref{def:tree-embedding}, we have $j(s)\tl j(\en_b^{\mathcal TW}(|b|[i]))$ for some $i<\len(|b|)$. Given that $(|b|,\tau)\in\tr(W)$ entails $\supp_{|b|}(\tau)=|b|$, we can use the induction hypothesis to get
\begin{equation*}
s\leq_{\mathcal TW}\en_b^{\mathcal TW}(|b|[i])\in\left[\en_b^{\mathcal TW}\right]^{<\omega}\circ{\supp_{|b|}}(\tau)={\supp_{\mathcal TW}}\circ W\left(\en_b^{\mathcal TW}\right)(\tau).
\end{equation*}
We get $s\leq_{\mathcal TW}t$ by the second disjunct in the equivalence from Definition~\ref{def:Kruskal-fp}.
\end{proof}

Due to the bound on branchings, statement~(ii) of Theorem~\ref{thm:fin-uniform-original} can in fact be deduced from a result of Higman~\cite{higman52} (see also the survey by M.~Pouzet~\cite{Pouzet1985}). What~is often called Higman's lemma is a~weaker special case of this result. Kruskal~\cite{kruskal60} has shown that the result remains~valid when the bound on branchings is removed. Perhaps surprisingly, the extension by Kruskal does not have higher logical strength (in the case of trees without labels): Rathjen and Weiermann have shown that~(ii) is equivalent to statement~(iii) below (see~\cite[Corollary~2.1]{rathjen-weiermann-kruskal} and~\cite[Chapter~2]{meeren-thesis}). We keep the base theory~$\mathsf{ACA}_0$ from~\cite{rathjen-weiermann-kruskal}, even though~$\rca_0$ seems to suffice (see the first line of~\cite[Section~2]{rathjen-weiermann-kruskal} as well as~\cite[Section~6]{freund-commitment}).

\begin{corollary}\label{cor:unif-fin-trees}
Over $\mathsf{ACA}_0$, statement~(i) of Theorem~\ref{thm:fin-uniform-original} is equivalent to
\begin{enumerate}[label=(\roman*)]\setcounter{enumi}{2}
\item the original Kruskal theorem for the collection of all finite trees, which is the statement that $(\mathbb T_\omega,\tl)$ is a well partial order.
\end{enumerate}
\end{corollary}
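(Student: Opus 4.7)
The plan is to observe that this corollary follows immediately by chaining Theorem~\ref{thm:fin-uniform-original} with the Rathjen--Weiermann result recalled in the paragraph just before the statement. Concretely, Theorem~\ref{thm:fin-uniform-original} gives (i)~$\Leftrightarrow$~(ii) over~$\rca_0$, hence \emph{a fortiori} over~$\mathsf{ACA}_0$. The cited~\cite[Corollary~2.1]{rathjen-weiermann-kruskal} (see also~\cite[Chapter~2]{meeren-thesis}) gives (ii)~$\Leftrightarrow$~(iii) over~$\mathsf{ACA}_0$. Concatenating these two equivalences yields (i)~$\Leftrightarrow$~(iii) over~$\mathsf{ACA}_0$, which is exactly what the corollary asserts.

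For explicitness, I would spell out that the easy direction (iii)~$\Rightarrow$~(ii) is essentially immediate: the inclusion $\mathbb{T}_n\hookrightarrow\mathbb{T}_\omega$ is a quasi embedding with respect to~$\tl$ (the recursive definition of~$\tl$ in Definition~\ref{def:tree-embedding} does not depend on the global bound on branchings), so any bad sequence in $(\mathbb{T}_n,\tl)$ is automatically a bad sequence in $(\mathbb{T}_\omega,\tl)$. The substantive direction (ii)~$\Rightarrow$~(iii) is the nontrivial content attributed to Rathjen and Weiermann, obtained via an analysis of the maximal order types of the~$\mathbb{T}_n$. This is also the only point at which the base theory needs to be strengthened to~$\mathsf{ACA}_0$; the parenthetical remark after the corollary notes that $\rca_0$ is expected to suffice.

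Since the argument is a pure concatenation of two already-established equivalences, there is no substantive obstacle specific to this corollary. The conceptual content lies entirely in the Rathjen--Weiermann reduction, which says that passing from trees with bounded branchings to arbitrary finite branchings costs nothing in proof-theoretic strength, and in the non-trivial equivalence (i)~$\Leftrightarrow$~(ii) proved in Theorem~\ref{thm:fin-uniform-original}. The only thing worth writing out in the final proof is the short citation chain together with the one-sentence justification of (iii)~$\Rightarrow$~(ii) for completeness.
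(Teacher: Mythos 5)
Your proposal is correct and matches the paper's intended argument exactly: the corollary is obtained by concatenating the equivalence (i)~$\Leftrightarrow$~(ii) of Theorem~\ref{thm:fin-uniform-original} (over $\rca_0$, hence over $\mathsf{ACA}_0$) with the Rathjen--Weiermann equivalence (ii)~$\Leftrightarrow$~(iii) over $\mathsf{ACA}_0$, which is precisely what the paper states in the paragraph preceding the corollary. Your added remark that (iii)~$\Rightarrow$~(ii) is immediate via the inclusion $\mathbb{T}_n\hookrightarrow\mathbb{T}_\omega$ being a quasi embedding is correct but not needed, since that direction is already contained in the cited equivalence.
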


Parallel to the equivalence between~(ii) and~(iii), one might believe that the uniform Kruskal theorem has similar strength for finite and for infinite trace. This, however, is firmly refuted by the equivalence with $\Pi^1_1$-comprehension in Theorem~\ref{thm:unif-kruskal} (which is proved in~\cite{frw-kruskal}). The stark contrast between the finite and infinite case can be explained in terms of quantifier complexity: Girard has shown that the notion of dilator on linear orders is~$\Pi^1_2$-complete (see the proof due to D.~Normann presented in \cite[Annex~8.E]{girard-book-part2}). We expect that the same is true for $\wpo$-dilators, even though details have not been verified. For finite $\po$-dilators, on the other hand, preservation of well partial orders is equivalent to monotonicity, which is an arithmetical condition (see Propositions~\ref{prop:wpo-monotone} and~\ref{prop:monotone-to-wpo} above as well as~\cite[Lemma~5.2]{frw-kruskal}). This~observation alone entails that the uniform Kruskal theorem for finite monotone $\po$-dilators must be strictly weaker than the principle of \mbox{$\Pi^1_1$-}comprehension. We conclude the section with a slight improvement of Theorem~\ref{thm:unif-kruskal} that was promised above:

\begin{remark}\label{rmk:ads}
In~\cite{frw-kruskal}, the equivalence from Theorem~\ref{thm:unif-kruskal} has been established over $\rca_0$ extended by the chain antichain principle. We argue that the latter may be weakened to~$\ads$. For a well partial order~$Z$, put $W_Z(X):=1+Z\times X$ and extend this into a normal $\po$-dilator. Details can be found in Examples~2.3 and~3.3~of~\cite{frw-kruskal}. In the cited paper, the chain antichain principle was assumed to ensure that $W_Z$ is a $\wpo$-dilator, so that the uniform Kruskal theorem could be applied. Working in $\rca_0+\ads$, one can show that $W_\alpha$ is a $\wpo$-dilator whenever~$\alpha$ is a well order: Given an infinite sequence $(\alpha_0,x_0),(\alpha_1,x_1),\ldots$ in $\alpha\times X$, the crucial task is to find an infinite subsequence in which the components $\alpha_i$ are weakly increasing. If~there are only finitely many different~$\alpha_i$, we can conclude by the infinite pigeonhole principle, which follows from~$\ads$ (via~\cite[Proposition~4.5]{hirschfeldt-shore}). Otherwise we achieve $\alpha_0<_{\mathbb N}\alpha_1<_{\mathbb N}\ldots$ by passing to a subsequence. We may then form $A:=\{\alpha_i\,|\,i\in\mathbb N\}$ in~$\rca_0$. By $\ads$ we find $f:\mathbb N\to A$ such that $i<j$ entails $f(i)\prec f(j)$ with respect to the order on~$\alpha$. We can now search for $i(0)<i(1)<\ldots$ and \mbox{$j_0<j_1<\ldots$} with $\alpha_{i(k)}=f(j_k)$, which yields $\alpha_{i(0)}\prec\alpha_{i(1)}\prec\ldots$ as desired. Due to Example~3.9 of~\cite{frw-kruskal}, the initial Kruskal fixed point~$\mathcal TW_\alpha$ coincides with $\alpha^{<\omega}$, the set of finite sequences in~$\alpha$ with the order from Higman's lemma. So over $\rca_0+\ads$, the uniform Kruskal theorem for $\wpo$-dilators entails that $\alpha^{<\omega}$ is a well partial order for any well order~$\alpha$. The latter entails arithmetical comprehension and in particular the chain antichain principle, via a result of Girard and Hirst (see~\cite[Section~II.5]{girard87} and~\cite[Theorem~2.6]{hirst94}). Thus the base theory of Theorem~\ref{thm:unif-kruskal} can be weakened as promised. Note that $\tr(W_\alpha)$ is infinite when the same holds for~$\alpha$. The chain antichain principle cannot be derived from the uniform Kruskal theorem for finite monotone $\po$-dilators, as the latter is a $\Pi^1_1$-statement.
\end{remark}

\section{On Kruskal fixed points in the absence of~$\mathsf{ADS}$}\label{sect:without-ADS}

In this section, we show that the class of $\wpo$-dilators is heavily restricted in systems that reject the axiom $\ads$. Moreover, in Theorems~\ref{thm:dichotomy} and~\ref{thm:main}, we prove that a certain variant of the uniform Kruskal theorem is weak over $\rca_0$, while it entails $\Pi^1_1$-comprehension in the presence of~$\cac$. The following notion will play a crucial role.

\begin{definition}
	A $\po$-dilator~$W$ is called unary if $\supp_X(\sigma)$ has at most one element, for every partial order~$X$ and all~$\sigma\in X$.
\end{definition}

Note that the following result involves a theory that is not sound. Normal $\wpo$-dilators that are not unary do in fact exist (see, e.\,g., the previous section). Later, the result will be applied inside $\omega$-models of $\neg\ads$.

\begin{lemma}\label{lem:ads-unary}
	In the theory $\rca_0+\neg\ads$, one can derive that any normal~$\wpo$-dilator~$W$ is unary.
\end{lemma}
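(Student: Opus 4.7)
My plan is to argue by contradiction: assume $W$ is a normal $\wpo$-dilator that is not unary, and exhibit a bad sequence in $W(Y)$ for a provably well partial order~$Y$. Since $W$ is not unary, there is some $(c_0,\rho_0)\in\tr(W)$ with $k:=|c_0|\geq 2$. The first step, and the key technical reduction, is to replace $c_0$ by an antichain. Let $A\in\po_0$ be the $k$-element antichain with elements $y_1,\dots,y_k$ and pick any bijection $h:c_0\to A$. The reflection condition $h(x)\leq_A h(x')\Rightarrow x=x'\Rightarrow x\leq_{c_0}x'$ is vacuous because $A$ is an antichain, so $h$ is a quasi embedding. Functoriality provides $\tau_0:=W(h)(\rho_0)\in W(A)$, and naturality of $\supp$ gives $\supp_A(\tau_0)=h(c_0)=A$. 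Passing to the normal form, we obtain $(A,\tau_0')\in\tr(W)$, so $W$ possesses an antichain trace of the same cardinality as $c_0$.

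For the bad sequence, use $\neg\ads$ to fix an infinite linear order $L$ with no infinite strictly monotone subsequence. Being linear with no infinite descending subsequence, $L$ is a well partial order in $\rca_0$; likewise $L^{op}$ is a wpo because $L$ has no infinite ascending subsequence. Form
\[ Y := L \sqcup L^{op} \sqcup \{*_3,\dots,*_k\}, \]
the disjoint sum with incomparability across the three parts. $Y$ is a wpo in $\rca_0$ because finite pigeonhole yields an infinite subsequence lying in one summand, each of which is a wpo. Enumerate $L=\{l_0,l_1,\dots\}$ bijectively and define, for each $n\in\mathbb N$, the embedding
\[ f_n:A\to Y,\quad f_n(y_1):=(l_n,L),\quad f_n(y_2):=(l_n,L^{op}),\quad f_n(y_i):=*_i\text{ for }3\leq i\leq k. \]
Each $f_n$ is an embedding because $A$ is an antichain and the chosen images lie in pairwise incomparable summands. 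Set $\sigma_n:=W(f_n)(\tau_0')\in W(Y)$.

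Now I verify that $(\sigma_n)_{n\in\mathbb N}$ is bad. By naturality, $\supp(\sigma_n)=\{(l_n,L),(l_n,L^{op}),*_3,\dots,*_k\}$. Suppose $\sigma_n\leq\sigma_m$ for some $n\neq m$. Normality forces each element of $\supp(\sigma_n)$ to have an upper bound in $\supp(\sigma_m)$. Cross-summand incomparability leaves only one candidate for $(l_n,L)$, namely $(l_m,L)$, so $l_n\leq_L l_m$; similarly $(l_n,L^{op})$ forces $l_n\leq_{L^{op}} l_m$, i.e., $l_n\geq_L l_m$. Together, $l_n=l_m$, contradicting injectivity of the enumeration. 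Hence $\sigma_n\not\leq\sigma_m$ for all $n\neq m$, contradicting that $W(Y)$ is a wpo. The main obstacle is identifying the right target~$Y$: normality alone only yields one-sided constraints, and using just $L$ (or just $L^{op}$) reduces bad sequences to descending chains in $L$, which $\neg\ads$ forbids. The trick is to pair $L$ with its reverse inside one wpo and, via the antichain reduction, arrange two support elements that can only be covered in orthogonal directions, so that comparability pins down a single element of $L$.
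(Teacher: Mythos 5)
Your proof is correct and follows essentially the same route as the paper: use $\neg\ads$ to obtain an infinite well order $L$ whose reverse is also well founded, map two support elements into $L$ and $L^{\mathrm{op}}$ inside the sum $L+L^{\mathrm{op}}+(\text{antichain})$, and let normality force $l_n\leq_L l_m$ and $l_n\geq_L l_m$ simultaneously. The only difference is your preliminary reduction of the trace element to an antichain, which the paper skips by working directly with $a=\supp_X(\sigma)$ and putting the discrete order on the third summand; this step is harmless but inessential.
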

\begin{proof}
	Towards a contradiction, assume that $a:=\supp_X(\sigma)$ contains two different elements~$x_0$ and~$x_1$. We invoke~$\neg\ads$ to pick an infinite well order~$Z$ that has no infinite ascending sequence, i.\,e., such that the inverse order~$Z^*$ is also well founded. Let us consider the well partial order
	\begin{equation*}
		Y:=Z+Z^*+a:=\{(i,y)\,|\,\text{either $i\in\{0,1\}$ and $y\in Z$ or $i=2$ and $y\in a$}\},
	\end{equation*}
	where the only inequalities are $(0,z)\leq_Y(0,z')$ and $(1,z)\geq_Y(1,z')$ for~$z\leq_Z z'$ as well as $(2,x)\leq_Y (2,x)$ for $x\in a$ (one could also use the induced order on~$a\subseteq X$). For each $z\in Z$, we have a quasi embedding
	\begin{equation*}
		f_z:a\to Y\quad\text{with}\quad f_z(x):=\begin{cases}
			(0,z) & \text{if }x=x_0,\\
			(1,z) & \text{if }x=x_1,\\
			(2,x) & \text{otherwise}.
		\end{cases}
	\end{equation*}
	Due to the support condition from Definition~\ref{def:po-dilator}, we may write $\sigma=W(\iota)(\sigma_0)$ with $\sigma_0\in W(a)$, where $\iota:a\hookrightarrow X$ is the inclusion. The naturality of supports ensures that we have
	\begin{equation*}
		[\iota]^{<\omega}\circ{\supp_a(\sigma_0)}={\supp_X}\circ W(\iota)(\sigma_0)=\supp_X(\sigma)=a
	\end{equation*}
	and hence $\supp_a(\sigma_0)=a$. For $z\in Z$ we put $\tau_z:=W(f_z)(\sigma_0)\in W(Y)$ and note
	\begin{align*}
		\supp_Y(\tau_z)&={\supp_Y}\circ W(f_z)(\sigma_0)=[f_z]^{<\omega}\circ{\supp_a}(\sigma_0)=\\
		{}&=\big\{(i,y)\,|\,\text{either $i\in\{0,1\}$ and $y=z$ or $i=2$ and $y\in a\backslash\{x_0,x_1\}$}\big\}.
	\end{align*}
	Given that~$Z$ is infinite, we have an injection~$\mathbb N\to Z$ (indeed $Z\subseteq\mathbb N$ will be given as a set of codes). Let us compose the latter with the map $z\mapsto \tau_z$, to get an infinite sequence in~$W(Y)$. The latter is a well partial order, as the same holds for~$Y$ and since $W$ is a $\wpo$-dilator. So~$Z$ must contain some $y\neq z$ with~${\tau_y\leq_{W(Y)} \tau_z}$. By~the normality condition from Definition~\ref{def:po-dilator}, we know that the elements ${(i,y)\in\supp_Y(\tau_y)}$ must be bounded by some elements of $\supp_Y(\tau_z)$, which can only be true if we have $(i,y)\leq_Y(i,z)$ for each $i\in\{0,1\}$. However, this yields $y\leq_Z z$ as well as $y\geq_Z z$, in contradiction with~$y\neq z$.
\end{proof}

Our next result confirms that being unary is a serious restriction (cf.~Theorem~\ref{thm:unif-kruskal}). We recall that $1$ denotes the order with a single element.

\begin{lemma}\label{lem:unary-aca}
	The following are equivalent over~$\rca_0$:
	\begin{enumerate}[label=(\roman*)]
		\item the uniform Kruskal theorem for unary monotone~$\po$-dilators~$W$ with the property that $W(1)$ is a well partial order,
		\item the principle of arithmetical comprehension.
	\end{enumerate}
\end{lemma}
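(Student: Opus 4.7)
For the implication (i) $\Rightarrow$ (ii), the plan is to instantiate the uniform Kruskal theorem at the concrete $\po$-dilator $W_\alpha$ from Remark~\ref{rmk:ads}, given by $W_\alpha(X):=1+\alpha\times X$ with $\supp_X((z,x)):=\{x\}$, for an arbitrary well order~$\alpha$. This dilator is unary because every support is either empty or a singleton. Since $W_\alpha(f)(z,x)=(z,f(x))$ and the product order on $\alpha\times X$ respects second-coordinate inequalities, we have $W_\alpha(f)\leq W_\alpha(g)$ whenever $f\leq g$, so $W_\alpha$ is monotone. Moreover $W_\alpha(1)\cong 1+\alpha$ is a well partial order because $\alpha$ is a well order. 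Applying~(i) yields that $\mathcal{T}W_\alpha$ is a well partial order, and Example~3.9 of~\cite{frw-kruskal} identifies this fixed point with $\alpha^{<\omega}$ under the Higman order. That $\alpha^{<\omega}$ is a well partial order for every well order~$\alpha$ is equivalent to arithmetical comprehension by the result of Girard and Hirst recalled in Remark~\ref{rmk:ads}.

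For (ii) $\Rightarrow$ (i), the plan is to work in $\rca_0$ together with arithmetical comprehension, exploiting that unary dilators force $\mathcal{T}W$ into a very constrained shape. Given a unary monotone $\po$-dilator~$W$ with $W(1)$ a well partial order, unariness ensures that every normal form $\circ(a,\sigma_0)$ satisfies $|a|\in\{\emptyset,1\}$, so every element of $\mathcal{T}W$ is either a leaf $\circ(\emptyset,\ell)$ with $\ell\in W(\emptyset)$ or a unary node $\circ(\{t\},u)$ with $t\in\mathcal{T}W$ and $u\in W(1)$ of full support. This presents $\mathcal{T}W$ as a spine of unary operators topped by a leaf, and I would encode the spine as a map $j:\mathcal{T}W\to W(1)^{<\omega}$ defined recursively by
\begin{equation*}
j(\circ(\emptyset,\ell)):=\langle\ell\rangle\quad\text{and}\quad j(\circ(\{t\},u)):=j(t)\frown\langle u\rangle,
\end{equation*}
where $\ell$ is identified with its image under the embedding $W(\emptyset)\hookrightarrow W(1)$. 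Since arithmetical comprehension proves Higman's lemma, $W(1)^{<\omega}$ is a well partial order, so the argument reduces to verifying that $j$ is a quasi embedding.

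The technical heart is to derive $s\leq_{\mathcal{T}W} t$ from $j(s)\leq_{W(1)^{<\omega}} j(t)$, which I would prove by induction on $h(s)+h(t)$ using the height function from the introduction. Given a Higman witness, one distinguishes whether the final entry of $j(s)$ is matched with the final entry of $j(t)$ or with an earlier one. In the unmatched case, restriction yields $j(s)\leq j(t')$ where $t=\circ(\{t'\},u')$; the induction hypothesis gives $s\leq t'$, and since $t'\in\supp_{\mathcal{T}W}(\tau)$ for the element $\tau$ representing $t$, the second disjunct of Definition~\ref{def:Kruskal-fp} yields $s\leq t$. In the matched case with $s=\circ(\{s'\},u)$ and $t=\circ(\{t'\},u')$, restriction combined with the induction hypothesis gives $s'\leq t'$, while the last-coordinate comparison gives $u\leq_{W(1)} u'$. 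The main obstacle is then to promote these two inequalities to $(s',u)\leq(t',u')$ in $W(\mathcal{T}W)$ so as to invoke the first disjunct of Definition~\ref{def:Kruskal-fp}: this step uses monotonicity of~$W$ applied to the pair of quasi embeddings $1\to\mathcal{T}W$ sending the unique point to $s'$ and to $t'$ respectively, together with the fact that $W$ sends the embedding $1\hookrightarrow\mathcal{T}W$ with $\ast\mapsto t'$ to an embedding $W(1)\to W(\mathcal{T}W)$. Subcases where $s$ or $t$ is a leaf are handled analogously, with the chain of embeddings $W(\emptyset)\hookrightarrow W(1)\hookrightarrow W(\mathcal{T}W)$ transporting the relevant inequalities.
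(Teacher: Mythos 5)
Your proof is correct and follows essentially the same route as the paper: (i)$\Rightarrow$(ii) by instantiating at $W_\alpha$ and invoking the Girard--Hirst result recalled in Remark~\ref{rmk:ads}, and (ii)$\Rightarrow$(i) by Higman's lemma together with a recursively defined quasi embedding of $\mathcal T W$ into a sequence order. The only (harmless) deviation is that you embed into $W(1)^{<\omega}$ after identifying $W(0)$ with its image in $W(1)$, whereas the paper uses $(W(0)+W(1))^{<\omega}$ with incomparable tagged summands; the extra case this creates (a leaf entry matched against a node entry) still yields $s\leq_{\mathcal T W}t$ via the first disjunct of Definition~\ref{def:Kruskal-fp}, exactly as you indicate.
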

\begin{proof}
The implication from~(i) to~(ii) holds by an argument from~\cite{frw-kruskal}, which has been recalled in Remark~\ref{rmk:ads} above. In contrast with the remark, our implication does not rely on $\ads$, because~(i) does not require that $W$ is a $\wpo$-dilator, even though this follows over a sufficient base theory. For the implication from (ii) to~(i), we consider the partial order
\begin{equation*}
Y:=W(0)+W(1)=\{(i,\sigma)\,|\,i\in\{0,1\}\text{ and }\sigma\in W(i)\}
\end{equation*}
in which the two summands are incomparable, so that $(i,\sigma)\leq_Y(i,\tau)$ for $\sigma\leq_{W(i)}\tau$ are the only inequalities. Given that $W(1)$ is a well partial order, the same holds for~$Y$, as the empty function $\iota:0\hookrightarrow 1$ induces an embedding ${W(\iota):W(0)\to W(1)}$. Assuming~(ii), we get Higman's lemma in the form that makes a well partial order out of $Y^{<\omega}$ (see, e.\,g., \cite[Theorem~X.3.22]{simpson09}). For unary~$W$, elements of~$\tr(W)$ have the form~$(i,\sigma)$ with $\sigma\in W(i)$ for~$i\in\{0,1\}$. We can thus consider
	\begin{equation*}
		j:\mathcal T W\to Y^{<\omega}\quad\text{with}\quad j\big(\circ(a,\sigma)\big):=\begin{cases}
			\langle(0,\sigma)\rangle & \text{if }a=\emptyset,\\
			\langle(1,\sigma)\rangle^\frown j(t) & \text{if }a=\{t\}.
		\end{cases}
	\end{equation*}
Closely parallel to the proof of Theorem~\ref{thm:fin-uniform-original}, one inductively shows that $j$ is a quasi embedding. It follows that $\mathcal T W$ is a well partial order, as needed for the uniform~Kruskal~theorem in~(i).
\end{proof}

In order to prove that the uniform Kruskal theorem is weak over~$\rca_0$, one might try to find an $\omega$-model of~$\neg\ads$ that validates statement~(i) above. However, no such model can exist, as the equivalent statement~(ii) entails $\ads$. In the following we look at $\Pi^1_1$-consequences of~(i), which are guaranteed to reflect into $\omega$-models. Let us recall that $\po$-dilators are officially coded as subsets of~$\mathbb N$. The powerset of $\mathbb N$ will be denoted by $\mathcal P(\mathbb N)$.

\begin{theorem}\label{thm:main-general}
Consider a theory $\mathsf T\supseteq\aca_0$ and a class $\Gamma\subseteq\mathcal P(\mathbb N)$ defined by a $\Sigma^1_1$-formula. Assume $\mathsf T$ proves that $W(1)$ is a well partial order for any unary monotone $\po$-dilator $W\in\Gamma$. Let us write $\varphi$ for the uniform Kruskal theorem for $\wpo$-dilators in $\Gamma$. Then any $\Pi^1_2$-theorem of $\rca_0+\varphi$ is provable in~$\mathsf T$.
\end{theorem}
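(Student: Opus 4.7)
My plan is to prove a $\Pi^1_2$-conservation result by a sub-model construction. Given a $\Pi^1_2$-sentence $\Psi\equiv\forall X\exists Y\,\theta(X,Y)$ (with $\theta$ arithmetical) that is provable from $\rca_0+\varphi$, I will argue by contradiction: assuming $\mathsf T\nvdash\Psi$, I fix a countable model $M\vDash\mathsf T+\neg\Psi$ together with a witness $X_0\in M$ such that $M\vDash\forall Y\,\neg\theta(X_0,Y)$. The idea is then to extract a sub-structure $M^*\subseteq M$ containing $X_0$ and satisfying $\rca_0+\varphi$; once this is done, soundness will produce some $Y\in M^*\subseteq M$ with $\theta(X_0,Y)$, and the absoluteness of arithmetical $\theta$ between $M^*$ and $M$ will contradict the choice of $X_0$.

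For the construction of $M^*$ I plan to take the sets computable from $X_0$ within $M$. By a relativized form of the classical recursion-theoretic construction, $M$ contains an $X_0$-computable infinite linear order $L$ admitting no $X_0$-computable ascending or descending sequence; the presence of $L$ in $M^*$ then forces $M^*\vDash\neg\ads$, while $M^*\vDash\rca_0$ and $X_0\in M^*$ are standard.

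The crux is to verify $M^*\vDash\varphi$. Given any (tacitly normal) $\wpo$-dilator $W\in\Gamma\cap M^*$ as witnessed in $M^*$, Lemma~\ref{lem:ads-unary} applied inside $M^*\vDash\rca_0+\neg\ads$ forces $W$ to be unary, and Proposition~\ref{prop:wpo-monotone} supplies monotonicity. These properties, together with $W\in\Gamma$ (which is $\Sigma^1_1$ and thus upward absolute from $M^*$ to $M$), are entirely encoded in the trace $\tr(W)$, a fixed subset of $\mathbb N$ common to both models; so $M$ also views $W$ as a unary monotone $\po$-dilator in $\Gamma$. The theorem's hypothesis, applied inside $M\vDash\mathsf T$, then yields that $W(1)$ is a well partial order in $M$. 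Invoking Lemma~\ref{lem:unary-aca} inside $M$ (using $\aca_0\subseteq\mathsf T$), I conclude that $\tw$ is a well partial order in $M$. Since $M^*\subseteq M$ contains no extra bad sequences, the $\Pi^1_1$-statement ``$\tw$ is wpo'' transfers downward to $M^*$, which establishes $M^*\vDash\varphi$ and closes the argument.

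The hardest part of the proof will be the absoluteness bookkeeping: I must carefully justify the upward transfer of ``unary monotone $\po$-dilator in $\Gamma$'' from $M^*$ to $M$, and the downward transfer of wpo-statements from $M$ to $M^*$. The first rests on the trace representation of $\po$-dilators (the same set in both models) combined with the $\Sigma^1_1$-nature of $\Gamma$; the second is immediate from $M^*\subseteq M$. A secondary technicality is the internalization of the construction of $L$ inside non-$\omega$ models of $\mathsf T$, but this should be routine given $\mathsf T\supseteq\aca_0$.
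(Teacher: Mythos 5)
Your argument is correct and follows essentially the same route as the paper's own proof: pass to the $\omega$-submodel of the $X$-computable sets, use the relativized Tennenbaum result to get $\neg\ads$ there, reduce $\varphi$ to the unary monotone case via Lemma~\ref{lem:ads-unary} and Proposition~\ref{prop:wpo-monotone}, apply the theorem's hypothesis together with Lemma~\ref{lem:unary-aca} in the outer model, and transfer the $\Pi^1_1$-conclusion downward. The only differences (the contrapositive framing, and verifying $\varphi$ dilator-by-dilator instead of isolating a single $\Pi^1_1$-sentence $\varphi_0$ that reflects into the submodel) are cosmetic.
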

\begin{proof}
Assume $\rca_0+\varphi$ proves $\forall X\subseteq\mathbb N.\,\theta(X)$ for a $\Sigma^1_1$-formula~$\theta$. To conclude by completeness, we consider an arbitrary model $\mathcal M\vDash\mathsf T$ and show that $\mathcal M\vDash\theta(X)$ holds for any $X$ in its second order part. Let $\mathcal N\subseteq\mathcal M$ be the $\omega$-submodel of all sets that $\mathcal M$ believes to be computable in~$X$. We recall that we have $\mathcal N\vDash\rca_0$ (see, e.\,g., \cite[Theorem~VIII.1.3]{simpson09}). A classical result of S.~Tennen\-baum asserts that $\ads$ fails in the realm of computable sets. The usual proof of this result (see, e.\,g., the textbook by J.~Rosenstein~\cite{Rosenstein82}) relativizes and is readily formalized in $\aca_0\subseteq\mathsf T$ (see Lemma~\ref{lem:Tennenbaum} below for a less direct argument over~$\rca_0$). We thus get $\mathcal N\vDash\neg\ads$. In view of Proposition~\ref{prop:wpo-monotone} and Lemma~\ref{lem:ads-unary}, we can infer $\mathcal N\vDash\varphi_0\to\varphi$, where $\varphi_0$ is the uniform Kruskal theorem for unary and monotone $\po$-dilators in~$\Gamma$. By the previous lemma we have $\mathcal M\vDash\varphi_0$. Being a unary monotone $\po$-dilator is an arithmetical property, as it is determined on the subcategory of finite partial orders. For more details on this point we refer to~\cite{frw-kruskal}, where it is also shown that the initial Kruskal fixed point $\mathcal T W$ is computable in~$W$. Given that $\Gamma$ is $\Sigma^1_1$, it follows that $\varphi_0$ is $\Pi^1_1$, so that we get $\mathcal N\vDash\varphi_0$ and hence $\mathcal N\vDash\varphi$. By~assumption we get $\mathcal N\vDash\theta(X)$ and then $\mathcal M\vDash\theta(X)$, as $\theta$ is $\Sigma^1_1$.
\end{proof}

Part~(a) of the following dichotomy result is obtained as a direct application of the previous theorem. With some additional work, one can replace $\aca_0$ by a weaker theory, as we shall see below. To indicate further applications of the previous theorem, we note that one can take~$\Gamma$ to be the class of $\po$-dilators~$W$ such that $W(1)$ admits a quasi embedding into a fixed~$X$ that $\mathsf T$ proves to be a well~partial order.

\begin{theorem}\label{thm:dichotomy}
Write $\psi$ for the uniform Kruskal theorem for $\wpo$-dilators $W$ such that $W(1)$ is finite.
\begin{enumerate}[label=(\alph*)]
\item Any $\Pi^1_2$-theorem of $\rca_0+\psi$ is provable in~$\aca_0$.
\item In $\rca_0+\cac$ one can prove that $\psi$ is equivalent to $\Pi^1_1$-comprehension.
\end{enumerate}
\end{theorem}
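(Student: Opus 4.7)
The plan for part~(a) is to apply Theorem~\ref{thm:main-general} with $\mathsf{T}=\aca_0$ and with~$\Gamma$ the class of $\po$-dilators~$W$ such that $W(1)$ is finite. Membership in~$\Gamma$ is arithmetical, since the condition ``$W$ is a $\po$-dilator'' is arithmetical (as recalled in the proof of Theorem~\ref{thm:main-general}) and ``$W(1)$ is finite'' is~$\Sigma^0_1$; in particular $\Gamma$ is $\Sigma^1_1$. The remaining hypothesis is automatic: for every $W\in\Gamma$ the set $W(1)$ is finite, and every finite partial order is trivially a well partial order, so $\aca_0$ (in fact $\rca_0$) proves that $W(1)$ is a well partial order for every unary monotone $W\in\Gamma$. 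Theorem~\ref{thm:main-general} then yields~(a) at once.

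For part~(b), the forward implication is straightforward: $\Pi^1_1$-comprehension implies $\cac$ and a fortiori~$\ads$, so Theorem~\ref{thm:unif-kruskal} delivers the uniform Kruskal theorem for all $\wpo$-dilators, in particular for those with $W(1)$ finite, which is~$\psi$.

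For the reverse implication, I would work inside $\rca_0+\cac+\psi$ and derive $\Pi^1_1$-comprehension by revisiting the reversal in~\cite{frw-kruskal}. That reversal associates to each relevant instance a specific normal $\wpo$-dilator whose initial Kruskal fixed point being a well partial order encodes the desired comprehension. The plan is to check that the dilators used there may be chosen inside~$\Gamma$: one would ensure that only finitely many nullary and unary constructors occur, while all remaining combinatorial content is carried by pairs $(c,\rho)\in\tr(W)$ with~$c$ of size at least~$2$. Supports of such higher-arity constructors are unrestricted, so there is room enough to encode the information needed for the reversal. As an alternative route that I would pursue in parallel, I would try to show that, in the presence of~$\cac$, one can from any normal $\wpo$-dilator~$W$ manufacture a normal $\wpo$-dilator $W'\in\Gamma$ together with a quasi embedding $\mathcal TW\hookrightarrow\mathcal TW'$; then $\psi+\cac$ would entail the full uniform Kruskal theorem and Theorem~\ref{thm:unif-kruskal} would finish the job.

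The main obstacle is precisely this re-encoding. The dilators that arise most naturally in a reversal, typified by $W_\alpha(X):=1+\alpha\times X$ from Remark~\ref{rmk:ads}, have $W_\alpha(1)=1+\alpha$ infinite whenever~$\alpha$ is, so $\psi$ does not apply to them directly. Replacing them by binary or higher-arity variants that absorb the label $\alpha$ into an argument position, while preserving the embeddability structure that makes the reversal work and keeping the whole construction definable in~$\rca_0$, is the technical heart of the statement. The role of~$\cac$ should be twofold: it supplies $\ads$, which is needed to guarantee that the modified dilators remain $\wpo$-dilators, and it provides enough combinatorial strength to carry out the reduction itself.
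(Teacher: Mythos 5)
Your part~(a) is correct and is exactly the paper's argument: apply Theorem~\ref{thm:main-general} with $\mathsf T=\aca_0$ and $\Gamma$ the (arithmetically defined, hence $\Sigma^1_1$) class of $\po$-dilators with $W(1)$ finite, noting that finiteness of $W(1)$ trivially makes it a well partial order. The forward direction of~(b) is also fine.

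The reverse direction of~(b), however, is left with a genuine gap. Your second ``alternative route'' is indeed the paper's strategy --- transform an arbitrary normal $\wpo$-dilator $W$ into a normal $\wpo$-dilator $W'$ with $W'(1)$ finite together with a quasi embedding $\mathcal TW\to\mathcal TW'$ --- but you explicitly leave the construction of $W'$ as ``the technical heart'' and ``the main obstacle'', so the proof is not actually completed. The missing idea is a padding trick that is entirely generic and does not require re-engineering the specific dilators of the reversal in~\cite{frw-kruskal} (your first route, which points in a less fruitful direction). One sets
\begin{equation*}
W'(X) := \{\star,+\} \cup \{\langle x, y, \sigma \rangle \in X \times X \times W(X) \mid x \neq y\},
\end{equation*}
with $\star$ and $+$ incomparable to everything and triples ordered componentwise. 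The clause $x\neq y$ is the whole point: over the one-element order there is no pair of distinct elements, so $W'(1)=\{\star,+\}$ is finite no matter how large $W(1)$ is. Since $W'(X)$ quasi embeds into $2+X\times X\times W(X)$, closure of well partial orders under products --- and this, rather than $\ads$, is where $\cac$ enters --- makes $W'$ a $\wpo$-dilator. The quasi embedding $j:\mathcal TW\to\mathcal TW'$ is then defined by recursion over terms via $j(\circ(a,\sigma)):=\kappa'\left(\left\langle\overline\star,\overline +, W(j\circ\en^{\mathcal TW}_a)(\sigma)\right\rangle\right)$, where $\overline\star=\circ(\emptyset,\star)$ and $\overline +=\circ(\emptyset,+)$ are two distinct incomparable elements of $\mathcal TW'$ serving as the padding arguments; one checks by induction on a suitable length function that $j$ reflects the order, using that $j(s)$ can never lie below $\overline\star$ or $\overline +$ (these have empty support and are incomparable with other elements, so the second disjunct of Definition~\ref{def:Kruskal-fp} cannot fire through them). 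Without this construction, or some substitute for it, your argument for~(b) does not go through.
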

\begin{proof}
To obtain~(a), one applies the previous theorem to the class~$\Gamma$ of $\po$-dilators $W$ with the property that $W(1)$ is finite. The latter is a $\Sigma^1_1$-condition (in fact arithmetical) that entails that $W(1)$ is a well partial order. In order to derive~(b) from Theorem~\ref{thm:unif-kruskal} (originally proved in~\cite{frw-kruskal}), we only need to show that a normal $\wpo$-dilator~$W$ can be transformed into a normal $\wpo$-dilator~$W'$ such that $W'(1)$ is finite and we have a quasi embedding $j:\mathcal T W\to\mathcal T W'$ between the initial Kruskal fixed points. For a partial order $X$, we set
		\begin{equation*}
				W'(X) := \{\star,+\} \cup \{\langle x, y, \sigma \rangle \in X \times X \times W(X) \mid x \neq y\}.
		\end{equation*}
		Here $\star$ and $+$ represent new elements that are incomparable with any others, while we have
\begin{equation*}
\langle x,y,\sigma\rangle\leq_{W'(X)}\langle x',y',\sigma'\rangle\quad\Leftrightarrow\quad x\leq_X x'\text{ and }y\leq_X y'\text{ and }\sigma\leq_{W(X)}\sigma'.
\end{equation*}
To get a functor, we recall that any quasi embedding $f:X\to Y$ is injective. Now, this allows us to put
\begin{equation*}
W'(f)(\star) := \star,\quad W'(f)(+) :=+,\quad W'(f)(\langle x, y, \sigma \rangle) := \langle f(x), f(y), W(f)(\sigma) \rangle. 
\end{equation*}
Let us also define functions $\supp'_X: W'(X) \to [X]^{<\omega}$ by setting
		\begin{equation*}
			\supp'_X(\star):=\emptyset,\quad\supp'_X(+) := \emptyset,\quad\supp'_X(\langle x, y, \sigma \rangle) := \{x, y\} \cup \supp_X(\sigma),
		\end{equation*}
where $\supp_X:W(X)\to[X]^{<\omega}$ is the support function already provided by the $\wpo$-dilator~$W$. One readily checks that this turns $W'$ into a normal \mbox{$\po$-dilator}. Crucially, $\cac$ ensures that the well partial orders are closed under products (see~\cite{cholak-RM-wpo}), which implies that $W'$ is a $\wpo$-dilator. Also note that $W'(1)=\{\star,+\}$ is finite. To construct our quasi embedding $j:\mathcal TW\to\mathcal TW'$, we first note that $\mathcal TW'$ contains distinct elements $\overline\star:=\circ(\emptyset,\star)$ and $\overline +:=\circ(\emptyset,+)$. As explained in the introduction, each $\circ(a, \sigma) \in \tw$ gives rise to an element ${W(\en^{\tw}_a)(\sigma) \in W(\tw)}$. Recall that the Kruskal fixed point of $W'$ comes with a bijection ${\kappa': W'(\tw') \to \tw'}$. As in the proof of Theorem~\ref{thm:fin-uniform-original}, we can use recursion over terms to define
		\begin{equation*}
			j(\circ(a, \sigma)) := \kappa'\left(\left\langle\overline\star,\overline +, W(j \circ \en^{\mathcal T W}_a)(\sigma)\right\rangle\right)\period
		\end{equation*}
To be more precise, we note that the finite function $j \circ \en^{\mathcal T W}_a$ is available in the recursion step. In the inductive proof below, the induction hypothesis will imply that $j \circ \en^{\mathcal T W}_a$ is a quasi embedding, which is needed to ensure that $W(j \circ \en^{\mathcal T W}_a)$ is defined. To disentangle the recursion and the induction, one can assign a default value such as $j(\circ(a,\sigma)):=\overline\star$ for the hypothetical case that $j \circ \en^{\mathcal T W}_a$ is no quasi embedding (which is decidable). As preparation for the aforementioned induction, we define a length function $l:\mathcal TW\to\mathbb N$ with $l(\circ(a,\sigma)):=1+\sum_{t\in a}l(t)$, by another recursion over terms. To check that $j(s)\leq_{\mathcal T W'}j(t)$ entails $s\leq_{\mathcal T W}t$, one argues by induction on $l(s)+l(t)$. For $s=\circ(a,\sigma)$ and $t=\circ(b,\tau)$, the induction hypothesis ensures that $j$ is a quasi embedding on $a\cup b$, due to our choice of length function. Besides this observation, the only noteworthy point in the induction is that we have
\begin{equation*}
\supp'_{\mathcal T W'}\left(\left\langle\overline\star,\overline +,W(j \circ \en^{\mathcal T W}_b)(\tau)\right\rangle\right)=\{\overline\star,\overline +\}\cup[j]^{<\omega}(b).
\end{equation*}
To avoid that the equivalence from Definition~\ref{def:Kruskal-fp} yields $j(s)\leq_{\mathcal T W'}j(t)$ for any~$t$, we need to ensure that $j(s)\not\leq_{\mathcal T W'}x$ holds for $x\in\{\overline\star,\overline +\}$. The latter can itself be deduced from the equivalence in Definition~\ref{def:Kruskal-fp}, as $\star$ and $+$ have empty supports and are incomparable with other elements.
\end{proof}

According to Theorem~\ref{thm:fin-uniform-original}, the uniform Kruskal theorem for finite monotone $\po$-dilators entails the original Kruskal theorem for trees with bounded branchings, provably in~$\rca_0$ (see Corollary~\ref{cor:unif-fin-trees} for the case with arbitrary branchings). Over a sufficient base theory, the finite monotone $\po$-dilators coincide with the finite $\wpo$-dilators, by Propositions~\ref{prop:wpo-monotone} and~\ref{prop:monotone-to-wpo}. The following implies that the resulting versions of the uniform Kruskal theorem differ over~$\rca_0$.

\begin{corollary}
If $\varphi$ is a $\Sigma^1_2$-theorem of~$\aca_0$, then $\rca_0+\varphi$ does not show that the uniform Kruskal theorem for finite $\wpo$-dilators entails the original Kruskal theorem for binary trees, i.\,e., the statement that $(\mathbb T_3,\tl)$ is a well partial order (cf.~Definitions~\ref{def:trees} and~\ref{def:tree-embedding}).
\end{corollary}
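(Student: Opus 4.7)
Let $\chi$ denote the uniform Kruskal theorem for finite $\wpo$-dilators, and let $\pi$ abbreviate the statement that $(\mathbb T_3,\tl)$ is a well partial order. The plan is a direct application of Theorem~\ref{thm:dichotomy}(a), once we notice that $\psi$ implies $\chi$ over $\rca_0$: every element of $W(1)$ has a unique normal form $W(\en_a^1)(\sigma_0)$ with $a\in[1]^{<\omega}$ and $(|a|,\sigma_0)\in\tr(W)$, so if $\tr(W)$ is finite then so is $W(1)$. Thus every finite $\wpo$-dilator already falls under the class covered by~$\psi$, and every instance of~$\chi$ is an instance of~$\psi$.

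Now I would argue by contradiction. Suppose that $\rca_0+\varphi\vdash\chi\to\pi$ for some $\Sigma^1_2$-theorem $\varphi$ of $\aca_0$. Combined with $\rca_0\vdash\psi\to\chi$, this gives $\rca_0+\varphi+\psi\vdash\pi$, and the deduction theorem produces $\rca_0+\psi\vdash\varphi\to\pi$. Writing $\varphi\equiv\exists X\forall Y\theta_0$ and $\pi\equiv\forall Z\theta_1$ with arithmetical matrices, the implication $\varphi\to\pi$ prenexes as $\forall X\forall Z\exists Y(\neg\theta_0\lor\theta_1)$, which is $\Pi^1_2$. Theorem~\ref{thm:dichotomy}(a) therefore yields $\aca_0\vdash\varphi\to\pi$, and since $\aca_0\vdash\varphi$ by hypothesis, we conclude $\aca_0\vdash\pi$.

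This is the desired contradiction, because $\aca_0$ does not prove Kruskal's theorem for binary ordered trees. More concretely, the maximal order type of $(\mathbb T_3,\tl)$ strictly exceeds $\varepsilon_0$, the proof-theoretic ordinal of $\aca_0$; natural primitive recursive order-preserving embeddings of initial segments of the Veblen hierarchy into $\mathbb T_3$ can be extracted from D.~Schmidt's analysis of the maximal order types of the families~$\mathbb T_n$, as developed further in~\cite{rathjen-weiermann-kruskal} and~\cite{meeren-thesis}. The main obstacle I anticipate is pinning down a clean textual citation for this ordinal-analytic lower bound specifically at the level of binary trees; the remainder of the argument is routine and rests entirely on the $\Pi^1_2$-conservativity provided by the dichotomy theorem together with the elementary implication $\psi\to\chi$ noted above.
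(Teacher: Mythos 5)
Your argument is correct and follows essentially the same route as the paper: the normal-form observation that a finite trace forces $W(1)$ to be finite (so that $\psi$ subsumes the uniform Kruskal theorem for finite $\wpo$-dilators), the application of the $\Pi^1_2$-conservativity from Theorem~\ref{thm:dichotomy}(a) to the implication $\varphi\to\pi$, and the unprovability of the binary Kruskal theorem in $\aca_0$. The only cosmetic difference lies in the final step, where the paper justifies $\aca_0\nvdash\pi$ via conservativity over Peano arithmetic, Gentzen's theorem on $\varepsilon_0$, and de~Jongh's derivation of the well foundedness of $\varepsilon_0$ from the binary Kruskal theorem, rather than via order-type computations for $\mathbb T_3$.
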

\begin{proof}
As shown by H.~Friedman, the original Kruskal theorem for finite trees with arbitrary branchings is unprovable in predicative theories (see~\cite{simpson85}). The version for binary trees is unprovable in $\aca_0$. In order to see this, one should first recall that $\aca_0$ is conservative over Peano arithmetic (see, e.\,g., \cite[Theorem~III.1.16]{hajek91}). The latter cannot prove the well foundedness of a certain term system $\varepsilon_0$ that represents its proof theoretic ordinal, due to classical work of G.~Gentzen~\cite{gentzen43} (see, e.\,g.,~\cite{buchholz91} for a modern presentation). Finally, the well foundedness of $\varepsilon_0$ follows from the original Kruskal theorem for binary trees, by unpublished work of D.~de Jongh (see~\cite{schmidt75} for the attribution and, e.\,g., \cite{freund-commitment} for a detailed proof). Now if $W$ is a finite $\po$-dilator, then $W(1)$ is finite. This holds because any $\sigma\in W(1)$ has a normal form $\sigma=W(\en_a^1)(\sigma_0)$ with $a\subseteq 1$ and $(|a|,\sigma_0)\in\tr(W)$, as explained in the introduction. Hence the uniform Kruskal theorem for finite $\wpo$-dilators is entailed by statement~$\psi$ from the previous theorem. So if the corollary was false, $\rca_0+\psi$ would prove that $\varphi$ implies the original Kruskal theorem for binary trees. This implication is $\Pi^1_2$, so that the previous theorem would make it provable in~$\aca_0$. As the latter proves~$\varphi$, it would thus prove the binary Kruskal theorem, which we have seen to be false.
\end{proof}
 
In the following, we show how to improve part~(a) of Theorem~\ref{thm:dichotomy}. To see that Tennenbaum's result about $\ads$ is available over~$\rca_0$, we combine some results from the literature. By $\varphi^X$ we denote the formula that results from~$\varphi$ when all second order quantifiers are restricted to the sets that are computable relative to~$X\subseteq\mathbb N$.

\begin{lemma}\label{lem:Tennenbaum}
We have $\neg\ads^X$ for all $X\subseteq\mathbb N$, provably in~$\rca_0$.
\end{lemma}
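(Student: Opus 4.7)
The plan is to relativise the classical Tennenbaum construction of a computable linear order of type $\omega+\omega^*$ whose ascending and descending parts are both non-computable. Fixing $X\subseteq\mathbb N$, I will describe an $X$-computable linear order $L$ on $\mathbb N$ such that any $X$-computable infinite ascending or descending sequence in $L$ would yield an $X$-computable separation of an effectively $X$-inseparable pair of $X$-c.e. sets, contradicting the latter's inseparability.

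Concretely, start from the canonical effectively $X$-inseparable pair $A_i:=\{e\,|\,\Phi_e^X(e)\downarrow=i\}$ for $i\in\{0,1\}$, where $(\Phi_e)_e$ enumerates Turing functionals. The usual recursion-theoretic diagonalisation, which uses only $\Sigma^0_1$-induction and the $s$-$m$-$n$ theorem relative to $X$, shows in $\rca_0$ that no total $X$-computable $\{0,1\}$-valued function separates $A_0$ from $A_1$. Fix an $X$-computable joint enumeration adding at most one element of $A_0\cup A_1$ per stage, and build $L$ stage by stage: when a fresh element $n$ is enumerated, place it immediately to the right of the existing $\omega$-block (if $n\in A_0$) or immediately to the left of the existing $\omega^*$-block (if $n\in A_1$); naturals never enumerated are appended as a harmless tail. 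Interleave a diagonalisation against each candidate Turing functional $\Phi_e^X$ so that, should $\Phi_e^X$ eventually enumerate an $L$-ascending (resp.\ $L$-descending) sequence, the timing of the $A_0$- and $A_1$-enumerations is steered to let the range of $\Phi_e^X$ decide membership in $A_0$ versus $A_1$, producing an $X$-computable separator for $(A_0,A_1)$. Since no such separator exists, no $X$-computable infinite $L$-ascending or $L$-descending set can exist, which is exactly $\neg\ads^X$.

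The main obstacle is formalising this priority/diagonalisation argument within $\rca_0$, whose induction is restricted to $\Sigma^0_1$-formulas; the standard Tennenbaum proof in Rosenstein's textbook is naturally written in $\aca_0$, as the paper has already noted just before the lemma. This is precisely where the phrase ``combine some results from the literature'' enters: one invokes existing reverse-mathematics formalisations of effectively inseparable $X$-c.e.\ pairs, of Tennenbaum-style linear orders, and of the extraction of separators from ascending/descending sequences, all known to be provable in $\rca_0$, and threads them together to conclude $\forall X\,\neg\ads^X$ uniformly and over $\rca_0$. Should the direct priority bookkeeping prove awkward inside $\rca_0$, an alternative route is to invoke a reverse-mathematics reduction (e.g.\ via $\Sigma^0_2$-bounding or $\mathsf{COH}$-style principles) that transfers a known failure of $\ads$ in the $X$-computable sets to the relativised $\Pi^1_1$-statement at hand.
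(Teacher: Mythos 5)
Your strategy---relativizing the direct Tennenbaum construction---is not the route the paper takes, and as written it has two genuine gaps. First, the extraction of a separator does not work as described: if $L$ has type $\omega+\omega^*$, then any infinite $L$-ascending sequence $(a_i)$ lies entirely in the $\omega$-block and is cofinal there, so from it you only obtain the $\omega$-block as the set $\{m\mid\exists i\,(m\leq_L a_i)\}$, which is merely $\Sigma^0_1$ in $X$ and the sequence. An $X$-c.e.\ separator of $(A_0,A_1)$ is no contradiction at all (the set $A_0$ itself is one); to reach an $X$-\emph{computable} separator you need exactly the ``interleaved diagonalisation \ldots\ steering the timing of the enumerations'' that you mention but never carry out---that step is the entire content of Tennenbaum's theorem, not a routine addendum. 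Second, and more importantly for this particular lemma, the whole point is provability in $\rca_0$: the paper records, just before the lemma, that the Rosenstein-style argument is readily formalized in $\aca_0$, and the lemma exists precisely to supply ``a less direct argument over $\rca_0$''. Your fallback---``invoke existing reverse-mathematics formalisations \ldots\ all known to be provable in $\rca_0$''---names no such results and is therefore circular: this lemma \emph{is} the literature result one would want to cite, and the concluding suggestion to route through $\Sigma^0_2$-bounding or $\mathsf{COH}$ is a placeholder rather than an argument.

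For contrast, the paper's proof avoids the construction entirely. It cites Hirschfeldt--Shore and Hirschfeldt--Shore--Slaman for $\rca_0\vdash\ads\to\mathsf{HYP}$, where $\mathsf{HYP}$ is the hyperimmunity principle asserting that for every $Y$ there is a function not dominated by any $Y$-computable function; relativization gives $\rca_0\vdash\ads^X\to\mathsf{HYP}^X$, and $\mathsf{HYP}^X$ is refutable outright in $\rca_0$ because, within the $X$-computable sets, every function is dominated by an $X$-computable function, namely itself. To salvage your approach you would have to either verify that the priority bookkeeping and the order-type analysis need only $\Sigma^0_1$-induction, or produce a concrete citation for the relativized, formalized statement; the $\mathsf{HYP}$ detour is what lets the paper do neither.
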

\begin{proof}
By work of D.~Hirschfeldt and R.~Shore~\cite{hirschfeldt-shore-RT} and their joint work with T.~Slaman~\cite{hirschfeldt-shore-slaman}, we have $\rca_0\vdash\ads\to\mathsf{HYP}$ and hence $\rca_0\vdash\ads^X\to\mathsf{HYP}^X$. Here $\mathsf{HYP}$ is the hyperimmunity principle which asserts that, for any~$Y\subseteq\mathbb N$, there is a function that is not dominated by any \mbox{$Y$-}computable function. It is clear that $\rca_0$ proves $\neg\mathsf{HYP}^X$, as needed.
\end{proof}

The following result characterizes the fragment of Lemma~\ref{lem:unary-aca} that is needed for Theorem~\ref{thm:dichotomy}. To explain the notation $\omega^{\omega^\omega}$ in part~(iii), we recall that $\omega^X$ denotes the lexicographic order on the non-increasing finite sequences in a given linear order~$X$.

\begin{lemma}\label{lem:omega3_kruskal_unary_finite}
	The following are equivalent over $\rca_0$:
	\begin{enumerate}[label=(\roman*)]
             \item the uniform Kruskal theorem for unary monotone $\po$-dilators~$W$ such that $W(1)$ is finite,
             \item Higman's lemma for sequences with entries from a finite partial order,
		\item the statement that $\omega^{\omega^\omega}$ is a well order.
	\end{enumerate}
\end{lemma}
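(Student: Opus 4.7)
The plan is to prove the cycle (i) $\Rightarrow$ (ii) $\Rightarrow$ (iii) $\Rightarrow$ (i). For (i) $\Rightarrow$ (ii), given a finite partial order $P$, I build the $\po$-dilator $W$ with $W(X):=\{\star\}\cup(P\times X)$, where $\star$ is incomparable with all pairs and $(p,x)\leq_{W(X)}(p',x')$ iff $p\leq_P p'$ and $x\leq_X x'$. Setting $\supp_X(\star):=\emptyset$ and $\supp_X((p,x)):=\{x\}$ makes $W$ unary, monotone, and normal, with $W(1)\cong\{\star\}\cup P$ finite. The trace consists only of $(\emptyset,\star)$ and the pairs $(1,(p,0))$ for $p\in P$, so each element of $\tw$ has the form $\circ(\emptyset,\star)$ or $\circ(\{t\},(p,0))$. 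A routine induction via Definition~\ref{def:Kruskal-fp} identifies $\tw$ with $P^{<\omega}$ under Higman's order, via $\circ(\emptyset,\star)\leftrightarrow\emptyset$ and $\circ(\{t\},(p,0))\leftrightarrow\langle p\rangle$ prepended to the image of $t$. Applying (i) yields (ii).

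For (ii) $\Rightarrow$ (iii), any finite partial order admits the identity as a quasi-embedding into the antichain of the same cardinality; componentwise, this lifts to a Higman quasi-embedding $P^{<\omega}\to n^{<\omega}$, so (ii) implies that $n^{<\omega}$ is a wpo for every $n\in\n$. By the classical de Jongh--Parikh maximum-order-type analysis, formalizable in $\rca_0$, a Schmidt-style reification converts any bad sequence in $n^{<\omega}$ into a descending sequence in the ordinal $\omega^{\omega^{n-1}}$. Hence $\omega^{\omega^{n-1}}$ is a well order for every $n$. Since any infinite descending sequence in $\omega^{\omega^\omega}$ is bounded above by its first element, which lies in some $\omega^{\omega^n}$, statement (iii) follows.

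For (iii) $\Rightarrow$ (i), let $W$ be a unary monotone $\po$-dilator with $W(1)$ finite. Unariness forces $|c|\leq 1$ for every $(c,\rho)\in\tr(W)$, so the trace is finite and splits into $L_0:=W(\emptyset)$ and $L_1:=\{\rho\in W(1):\supp_1(\rho)=1\}$, both finite. Define $j:\tw\to P^{<\omega}$ for $P:=L_0+L_1$ (disjoint sum, with incomparable summands) by $j(\circ(\emptyset,\sigma)):=\langle\sigma\rangle$ and $j(\circ(\{t'\},\rho)):=\langle\rho\rangle^\frown j(t')$, so that each $t\in\tw$ is encoded as a sequence of $L_1$-nodes terminating in an $L_0$-leaf. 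Granting that $j$ is a quasi-embedding, $\tw$ quasi-embeds further into $n^{<\omega}$ for $n:=|P|$ (via the antichain identity map), and the classical reification in the previous paragraph converts any putative bad sequence in $\tw$ into a descent in $\omega^{\omega^{n-1}}\subseteq\omega^{\omega^\omega}$, contradicting (iii). Thus $\tw$ is a wpo.

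The main obstacle is verifying that $j$ is a quasi-embedding, which I expect to do by induction on the combined encoding length. The critical case occurs when $j(s)=\langle\rho_s\rangle^\frown j(s')$ and $j(t)=\langle\rho_t\rangle^\frown j(t')$ are compared via a Higman matching sending $0\mapsto 0$: we need $s\leq_{\tw}t$ from $\rho_s\leq_{L_1}\rho_t$ together with the inductively obtained $s'\leq_{\tw}t'$. The former gives $W(\en_{\{t'\}}^{\tw})(\rho_s)\leq W(\en_{\{t'\}}^{\tw})(\rho_t)$ since $W$ preserves embeddings, while the latter yields $\en_{\{s'\}}^{\tw}\leq\en_{\{t'\}}^{\tw}$ pointwise, so monotonicity of $W$ gives $W(\en_{\{s'\}}^{\tw})(\rho_s)\leq W(\en_{\{t'\}}^{\tw})(\rho_s)$. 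Chaining these fulfils the first disjunct of Definition~\ref{def:Kruskal-fp}. Matchings skipping the first position of $j(t)$ reduce to the second disjunct inductively, and matchings crossing the $L_0$/$L_1$ boundary are excluded by incomparability of the summands in $P$. The length-based induction closely mirrors the one in the proof of Theorem~\ref{thm:dichotomy}(b).
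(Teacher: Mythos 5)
Your overall route matches the paper's. The implication from (i) to (ii) uses exactly the dilator $W_P(X)=1+P\times X$ from Remark~\ref{rmk:ads} with a finite label order; the implication back into (i) goes through a quasi-embedding of $\tw$ into finite sequences over the (finite) trace, which is precisely the unary specialization ($n=2$, so unary-branching trees become words) of the map $j$ from the proof of Theorem~\ref{thm:fin-uniform-original}; and the link to $\omega^{\omega^\omega}$ is the de Jongh--Parikh maximal order type computation that the paper cites. Your verification that $j$ reflects the order --- embedding-preservation for the constructor entry combined with monotonicity for the shifted support, plus the second disjunct of Definition~\ref{def:Kruskal-fp} for skipped positions --- is the same argument the paper reuses. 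So the proposal is essentially the paper's proof, merely arranged as a cycle (i)$\Rightarrow$(ii)$\Rightarrow$(iii)$\Rightarrow$(i) instead of two separate equivalences.

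One step is stated backwards and, as written, does not prove what you claim. In (ii)$\Rightarrow$(iii) you say that a reification converts bad sequences in $n^{<\omega}$ into descending sequences in $\omega^{\omega^{n-1}}$ and conclude that $\omega^{\omega^{n-1}}$ is therefore a well order. That inference goes the wrong way: knowing that $n^{<\omega}$ admits no bad sequences says nothing about a map whose domain is the bad sequences. A reification of that kind is the tool for deducing wpo-ness from well-foundedness of the ordinal, and indeed you use it in the correct direction in (iii)$\Rightarrow$(i). For (ii)$\Rightarrow$(iii) you need the lower-bound half of the de Jongh--Parikh analysis: an order-reflecting map from $\omega^{\omega^{n-1}}$ into $n^{<\omega}$ (equivalently, a linearization of $n^{<\omega}$ of order type $\omega^{\omega^{n-1}}$), so that a strictly descending sequence of ordinals yields a bad sequence of words, contradicting (ii). With that replacement the step is complete, and the required fact is contained in the formalizations of de Jongh--Parikh that the paper cites. (A small further remark: your quasi-embedding $P^{<\omega}\to n^{<\omega}$ is not needed to see that (ii) covers antichains, since an antichain is itself a finite partial order.)
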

\begin{proof}
Under the remaining conditions from~(i), the value $W(1)$ is finite precisely if the same holds for the trace~$\tr(W)$. Furthermore, Higman's lemma for finite orders is equivalent to the statement that $(\mathbb T^m_2,\tl)$ is a well partial order for any~${m\in\mathbb N}$, in the notation from the previous section. Thus the equivalence between~(i) and~(ii) holds by the proof of Theorem \ref{thm:fin-uniform-original} (also consider Remark~\ref{rmk:ads} with finite~$Z$). The~equivalence between (ii) and (iii) relies on the fact that the sequences in an order with $n+1$ elements have maximal order type~$\omega^{\omega^n}$, as shown by D.~de Jongh and R.~Parikh~\cite{deJongh-Parikh} (see, e.\,g.,~\cite{schuette-simpson,Hasegawa94} for the formalization in $\rca_0$).
\end{proof}

As promised, part~(a) of the following theorem improves part~(a) of Theorem~\ref{thm:dichotomy}. Concerning part~(b) below, we note that the infinite pigeonhole principle is a \mbox{$\Pi^1_1$-statement}, which one may include as premise of a $\Pi^1_2$-theorem as in~(a).

\begin{theorem}\label{thm:main}
	Let $\psi$ express the uniform Kruskal theorem for $\wpo$-dilators $W$ such that $W(1)$ is finite.
	\begin{enumerate}[label=(\alph*)]
		\item Any $\Pi^1_2$-theorem of $\rca_0+\psi$ is provable in $\rca_0+\text{``$\omega^{\omega^\omega}$ is well founded"}$.
		\item The well foundedness of $\omega^{\omega^\omega}$ is provable in the extension of $\rca_0$ by statement~$\psi$ and the infinite pigeonhole principle.
	\end{enumerate}
\end{theorem}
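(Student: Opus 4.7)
For part~(a), the plan is to rerun the proof of Theorem~\ref{thm:main-general} with $\Gamma$ taken to be the class of $\po$-dilators $W$ such that $W(1)$ is finite (an arithmetical, hence $\Sigma^1_1$, condition). Theorem~\ref{thm:main-general} was stated over $\mathsf T\supseteq\aca_0$, but the two uses of $\aca_0$ there can both be discharged more cheaply: the Tennenbaum-style failure of $\ads$ in the computable sets is already available in $\rca_0$ via Lemma~\ref{lem:Tennenbaum}, and the appeal to Lemma~\ref{lem:unary-aca} can be replaced by Lemma~\ref{lem:omega3_kruskal_unary_finite}, which derives the uniform Kruskal theorem for unary monotone $\po$-dilators with $W(1)$ finite from the weaker premise that $\omega^{\omega^\omega}$ is well founded.

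Spelling this out: assuming $\rca_0+\psi\vdash\forall X\,\theta(X)$ for some $\Sigma^1_1$ formula $\theta$ and taking $\mathcal M\vDash\rca_0+\text{``$\omega^{\omega^\omega}$ is well founded''}$ together with some $X$ in its second-order part, I would form the submodel $\mathcal N\subseteq\mathcal M$ of sets that $\mathcal M$ believes to be computable in $X$. Then $\mathcal N\vDash\rca_0+\neg\ads$ by Lemma~\ref{lem:Tennenbaum}, and Proposition~\ref{prop:wpo-monotone} together with Lemma~\ref{lem:ads-unary} shows that every normal $\wpo$-dilator in $\mathcal N$ is unary and monotone. Hence inside $\mathcal N$ the statement $\psi$ is implied by the uniform Kruskal theorem for unary monotone $\po$-dilators $W$ with $W(1)$ finite. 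The latter statement is $\Pi^1_1$ (the conditions on $W$ are arithmetical; being a well partial order is $\Pi^1_1$), holds in $\mathcal M$ by Lemma~\ref{lem:omega3_kruskal_unary_finite}, and therefore persists in $\mathcal N$. Thus $\mathcal N\vDash\psi$, so $\mathcal N\vDash\theta(X)$, and upward $\Sigma^1_1$ absoluteness transfers $\theta(X)$ back to $\mathcal M$.

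For part~(b), the plan is to derive the uniform Kruskal theorem for unary monotone $\po$-dilators $W$ with $W(1)$ finite and then apply Lemma~\ref{lem:omega3_kruskal_unary_finite}. Given such a $W$, it suffices to show that $W$ is a $\wpo$-dilator, since then $\psi$ directly yields that $\mathcal T W$ is a well partial order. To verify that $W(X)$ is a well partial order when $X$ is, I would take an infinite sequence $(\sigma_i)\subseteq W(X)$ and first apply pigeonhole to the $2$-colouring of $\mathbb N$ recording whether $\supp_X(\sigma_i)$ is empty. On the empty-support subsequence the support condition from Definition~\ref{def:po-dilator} identifies the $\sigma_i$ with elements of $W(0)$, which embeds into the finite set $W(1)$, so a further pigeonhole yields two equal terms. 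On the singleton-support subsequence write $\sigma_i=W(e_{x_i})(\sigma_i^*)$ with $e_{x_i}\colon 1\to X$ sending $0\mapsto x_i$ and $\sigma_i^*\in W(1)$; pigeonhole on the finite set $W(1)$ makes $\sigma_i^*$ constant on an infinite sub-subsequence. Since $X$ is a well partial order I can then find $i<j$ with $x_i\leq_X x_j$, so that $e_{x_i}\leq e_{x_j}$, and monotonicity of $W$ gives $\sigma_i\leq_{W(X)}\sigma_j$.

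The main obstacle in~(a) is organisational: verifying that every step in the proof of Theorem~\ref{thm:main-general} does in fact go through over $\rca_0$ when Lemmas~\ref{lem:Tennenbaum} and~\ref{lem:omega3_kruskal_unary_finite} are used in place of the $\aca_0$-dependent ingredients. In~(b) the delicate point is that the initial case split is $\Sigma^0_2$ as a property of the sequence, so the first invocation of pigeonhole must be to the assumed infinite pigeonhole principle rather than to $\rca_0$-internal induction.
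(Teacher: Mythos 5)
Your proposal is correct and takes essentially the same route as the paper: part~(a) reruns Theorem~\ref{thm:main-general} with Lemmas~\ref{lem:Tennenbaum} and~\ref{lem:omega3_kruskal_unary_finite} in place of the $\aca_0$-dependent ingredients, and part~(b) reduces to showing that any unary monotone $\po$-dilator $W$ with $W(1)$ finite is a $\wpo$-dilator, by pigeonhole on normal forms followed by monotonicity. The only small inaccuracy is your closing remark: the two-colour split on empty versus singleton supports is already available in $\rca_0$, and it is rather the pigeonhole on the model-finite (possibly nonstandard-size) set $W(1)$ that genuinely requires the assumed infinite pigeonhole principle.
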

\begin{proof}
Part~(a) can be proved exactly like Theorem~\ref{thm:main-general}, since Lemmas~\ref{lem:Tennenbaum} and~\ref{lem:omega3_kruskal_unary_finite} ensure that the relevant ingredients remain available. In order to reduce part~(b) to the previous lemma, it suffices to observe that any unary monotone $\po$-dilator~$W$ with finite trace is a $\wpo$-dilator, over $\rca_0$ with the infinite pigeonhole principle. To confirm this, we consider a well partial order~$X$ and an infinite sequence $\sigma_0,\sigma_1,\ldots$ in $W(X)$. Each entry has a normal form~$\sigma_i=W(\en_{a(i)}^X)(\sigma_i')$ with $(|a(i)|,\sigma_i')\in\tr(W)$, as explained in the introduction. Given that $W$ has finite trace, the infinite pigeonhole principle allows us to assume that $(|a(i)|,\sigma_i')$ is independent of~$i$. Since $W$ is unary, the sets $a(i)\subseteq X$ contain at most one element. In the non-trivial case they are non-empty, so that we may write $a(i)=\{x_i\}$ with $x_i\in X$. Due to the fact that $X$ is a well partial order, we find $i<j$ with $x_i\leq_X x_j$. Monotonicity yields $\sigma_i\leq_{W(X)}\sigma_j$, as needed to make $W$ a $\wpo$-dilator.
\end{proof}

To conclude, we mention one further variation of our dichotomy result.

\begin{remark}\label{rmk:cons-rca}
Let $\psi'$ be the uniform Kruskal theorem for $\wpo$-dilators $W$ such that $W(1)$ coincides with $W(0)$, i.\,e., with $W(1)=\rng(W(\iota))$ for the empty inclusion $\iota:0=\emptyset\hookrightarrow 1$. If $W$ satisfies this condition and is unary, then all elements of its trace have the form $(\emptyset,\sigma)$. The latter entails $\mathcal TW\cong W(0)$, so that $\rca_0$ alone proves the corresponding instance of the uniform Kruskal theorem. It follows that $\rca_0+\psi'$ is a $\Pi^1_2$-conservative extension of~$\rca_0$. Over $\rca_0+\cac$, however, statement~$\psi'$ is still equivalent to $\Pi^1_1$-comprehension, by the proof of Theorem~\ref{thm:dichotomy}.
\end{remark}

\bibliographystyle{plain}
\bibliography{Uniform-Kruskal-finite}

\end{document}